\newtheorem{thm}{Theorem}[section]
\newtheorem{cor}[thm]{Corollary}
\newtheorem{prop}[thm]{Proposition}
\theoremstyle{definition}
\newtheorem{rem}[thm]{Remark}
\title[The equilibrium shape]{On the equilibrium shape of a crystal}
\author{Emanuel Indrei}
\address{Department of Mathematics\\
Purdue University\\
West Lafayette, Indiana \\
USA.}
\begin{document}
\setcounter{page}{1}
\pagenumbering{arabic}
\maketitle

\begin{abstract}
A solution is given to a long-standing open problem posed by Almgren. 
\end{abstract}

\section{Introduction}

According to thermodynamics, the equilibrium shape of a small drop of water or a small crystal minimizes the free energy under a mass constraint. The phenomenon was independently discovered by W. Gibbs in 1878 \cite{G} and P. Curie in 1885  \cite{Crist}. Assuming the gravitational effect is negligible, the energy minimization is the surface area minimization and the solution is the convex set 

$$
K=  \bigcap_{v \in \mathbb{S}^{n-1}} \{x \in \mathbb{R}^n: \langle x, v\rangle< f(v)\}
$$
called the Wulff shape, where $f$ is a surface tension, i.e. a convex positively 1-homogeneous 
$$f:\mathbb{R}^n\rightarrow [0,\infty)$$
\\
with $f(x)>0$ if $|x|>0$ \noindent \cite{flashes, hilt, liebm, lau, MR0012454, MR82697, MR493671, MR1116536, MR1130601, MR1170247}. If $f(v)=R |v|$, i.e. the surface tension is isotropic, $K=B_R$ is the solution of the classical isoperimetric problem. 

Two main ingredients define the free energy of a set of finite perimeter $E \subset \mathbb{R}^n$ with reduced boundary $\partial^* E$: the surface energy

$$
\mathcal{F}(E)=\int_{\partial^* E} f(\nu_E) d\mathcal{H}^{n-1};
$$
and, the potential energy 
$$
\mathcal{G}(E)=\int_E g(x)dx,
$$
where $g \ge 0$, $g(0)=0$.
The free energy is the sum:
$$
\mathcal{E}(E)=\mathcal{F}(E)+\mathcal{G}(E).     
$$

In a gravitational field, the equilibrium shape for liquids was studied by P.S. Laplace in the early 1800s \cite{cap}. If the surface tension is isotropic, uniqueness and convexity were obtained by Finn \cite{MR607991, MR816345}  and Wente \cite{MR607986}. If $n=2$, subject to a wetting condition, the anisotropic tension was investigated by Avron, Taylor, and Zia via quadrature \cite{MR732374}. The work was motivated by low temperature experiments on helium crystals in equilibrium with a superfluid \cite{e1, e2, e3, e4, e5}. Also, various phase transition experiments were conducted in \cite{tr9, tr} and shape variations were investigated in \cite{trk} by  L.D. Landau.  

McCann considered the equilibrium shape for convex potentials with a bounded zero set \cite{MR1641031}. The central result is that the equilibrium planar crystals are a finite union of disjoint convex sets and each non-trivial component minimizes the free energy uniquely among convex sets of the same mass.  

Moreover, he proved that if the Wulff shape and potential are symmetric under $x \rightarrow -x$, there is a unique convex minimizer. Therefore, this provides information on the following problem mentioned in his paper:\\

Even when the field is the (negative) gradient of a convex potential, the equilibrium crystal is not known to be connected, much less convex or unique, \cite[p. 700]{MR1641031}. \\
 
If $g$ is locally bounded, Theorem \ref{@'} yields a stability result for small mass in any dimension which is stronger than uniqueness. 

Assuming $g \in C^1$ is coercive ($g(x)\rightarrow \infty$ as $|x|\rightarrow \infty$), $f \in C^{2, \alpha}(\mathbb{R}^n\setminus\{0\})$, $\alpha \in (0,1)$ is $\lambda-$elliptic, Theorem 2 in Figalli and Maggi \cite{MR2807136} yields convexity if the mass is small (a recent result obtained by Figalli and Zhang \cite{pFZ} implies that when $f$ is crystalline, if the mass is small, minimizers are polyhedra, cf. Remark \ref{Qzn}). Hence, the theorems together imply the uniqueness and convexity of minimizers if the mass is small. 
Theorem \ref{85} asserts that either: uniqueness and convexity hold for all masses; or, there exists $\mathcal{M}>0$ such that for all $m \in (0, \mathcal{M}]$, $E_m$ is unique and convex and for $m>\mathcal{M}$ there exists $a<m$ such that either convexity or uniqueness fails for mass $a$; or, the optimal critical mass $\mathcal{M}$ is exposed via 
\begin{equation*} \label{Mph}
\liminf_{m \rightarrow \mathcal{M}^-} \frac{\mathcal{M}^{\frac{n-1}{n}}-m^{\frac{n-1}{n}}}{w_m(\epsilon)} \ge \frac{1}{\gamma},
\end{equation*}
\\
where $w_m(\epsilon)>0$ is a modulus for the energy (see Proposition \ref{K}), $\gamma>0$ and $\epsilon \le \epsilon_0$, where $\epsilon_0$ is small. In two dimensions, the regularity assumptions are superfluous and the parameter which encodes the phase change is identified via Theorem \ref{8p}. Supposing the sub-level sets $\{g < t\}$ convex $\&$ a uniqueness assumption in the class of convex sets (valid for convex $g$), the critical mass is completely identified in Theorem \ref{8qp}. 

A convexity theorem for all masses necessarily must have restrictions on the potential. The following problem historically is attributed to Almgren \cite{MR2807136, MR1641031}.\\

{\bf Problem:} If the potential $g$ is convex (or, more generally, if the sub-level sets $\{g < t\}$
are convex), are minimizers convex or, at least, connected? \cite[p. 146]{MR2807136}.\\

\noindent I first proved that a convexity assumption is in general not sufficient.

\begin{thm} \label{g}
There exists $g \ge 0$ convex such that $g(0)=0$ $\&$ such that if $m>0$, then there is no solution to
$$
\inf\{\mathcal{E}(E): |E|=m\}.
$$ 
\end{thm}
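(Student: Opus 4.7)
The plan is to construct an explicit convex potential $g\ge 0$ with $g(0)=0$ such that, for every $m>0$,
\[
\inf\{\mathcal{E}(E):|E|=m\}=\mathcal{F}(K_m),
\]
yet this infimum is never achieved. Achieving it would require a candidate $E^{*}$ to be simultaneously a translate of the Wulff shape (by the rigidity of the Wulff inequality) and essentially contained in a Lebesgue-null zero set of $g$, an impossibility for a set of positive mass.

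Writing $x=(x_1,x')\in\mathbb{R}\times\mathbb{R}^{n-1}$, my candidate is the perspective-function construction
\[
g(x_1,x')=\frac{|x'|^{2}}{1+x_1}\quad\text{on }\{x_1>-1\},
\]
extended by $+\infty$ on $\{x_1\le-1\}$ (equivalently, by the lower semicontinuous convex closure). Convexity follows from the general fact that the perspective of a convex function is convex; a direct Hessian computation confirms this by showing that the Schur complement of the $(n-1)\times(n-1)$ transverse block vanishes, so the Hessian is positive semidefinite of rank $n-1$. One verifies $g\ge 0$, $g(0)=0$, and $\{g=0\}=\{(x_1,0'):x_1>-1\}$, a half-line of $n$-dimensional Lebesgue measure zero.

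First I would compute the infimum. The Wulff inequality together with $\mathcal{G}\ge 0$ gives $\mathcal{E}(E)\ge\mathcal{F}(E)\ge\mathcal{F}(K_m)$ for every admissible $E$. For the matching upper bound I translate the Wulff shape along the positive $x_1$-axis: for $E_k=K_m+k e_1$ translation invariance gives $\mathcal{F}(E_k)=\mathcal{F}(K_m)$, and
\[
\mathcal{G}(E_k)=\int_{K_m}\frac{|y'|^{2}}{1+y_1+k}\,dy\;\le\;\frac{C(m)}{k}\;\xrightarrow[k\to\infty]{}\;0,
\]
so $\mathcal{E}(E_k)\to\mathcal{F}(K_m)$. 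To rule out attainment, if some $E^{*}$ realised $\mathcal{E}(E^{*})=\mathcal{F}(K_m)$ with $|E^{*}|=m$, then $\mathcal{F}(E^{*})=\mathcal{F}(K_m)$ and $\mathcal{G}(E^{*})=0$ simultaneously. The rigidity case of the Wulff inequality forces $E^{*}=x_0+K_m$ for some $x_0\in\mathbb{R}^{n}$, while $\mathcal{G}(E^{*})=0$ confines $E^{*}$ modulo null sets to the one-dimensional set $\{g=0\}$, contradicting $|E^{*}|=m>0$.

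The main obstacle I anticipate is ensuring $g$ is convex on all of $\mathbb{R}^{n}$ in whatever sense the paper requires. The perspective construction above is naturally extended-real-valued, finite only on the half-space $\{x_1>-1\}$, and upgrading it to a globally finite real-valued convex $g$ with the same escape-to-infinity behaviour demands either adjoining a convex barrier on $\{x_1\le-1\}$ or switching to a different decay profile. A direct check of the separable ansatz $g(x_1,x')=h(x_1)|x'|^{2}$ shows that joint convexity forces $h h''\ge 2(h')^{2}$, whose extremal solutions $h=1/(c-x_1)$ all blow up in finite slabs, so the issue is genuine and must be handled carefully rather than dismissed. A secondary, minor obstacle is citing Wulff rigidity in the generality of the paper's anisotropic surface tension $f$, which is classical but must be invoked precisely.
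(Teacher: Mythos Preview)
Your proof is correct, and your perspective-function construction is exactly the upper branch of the paper's example, but the route differs in two ways. First, the paper carries out precisely the gluing you flag as your main obstacle: rather than extend by $+\infty$ on the bad half-space, it concatenates $x^{2}/(1+y)$ for $y>0$ with the polynomial $x^{2}(1-y+y^{2})$ for $y\le 0$ to produce a globally finite convex $g:\mathbb{R}^{2}\to[0,\infty)$. Second, the paper's non-existence argument is more elementary than yours: instead of identifying the infimum and invoking Wulff rigidity, it simply observes that $\partial_{y}g<0$ whenever $x\neq 0$, so translating any putative minimizer $E_{m}$ in the $e_{2}$-direction strictly lowers $\mathcal{G}$ while leaving $\mathcal{F}$ unchanged. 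Your approach buys an immediate extension to $\mathbb{R}^{n}$ and a clean identification of the infimum; the paper's buys a real-valued example (resolving your stated concern) and avoids the rigidity citation altogether---indeed the same monotonicity holds for your $g$ in the $e_{1}$-direction, so you could shortcut your own argument the same way.
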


Nevertheless, subject to additional assumptions, if the sub-level sets $\{g < t\}$
are convex, the convexity is true:

\begin{thm} \label{n=2}
If $n=2$ and\\ 
(i) g is locally Lipschitz in $\{g<\infty\}$\\
(ii) g admits minimizers $E_m \subset B_{R(m)}$ with $R \in L_{loc}^\infty(\mathbb{R}^+)$ \\ 
(iii) the sub-level sets $\{g<t\}$ are convex\\
(iv) when $E \subset \{g<\infty\}$ is bounded convex, $0 \notin E$, $|E \cap \{g \neq 0\}|>0$, then 
$$
\int_E \nabla g(x) dx \neq 0,
$$
then $E_m$ is convex for all $m \in (0,|\{g<\infty\}|)$.
\end{thm}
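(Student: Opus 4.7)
The strategy is first to decompose any minimizer $E_m$ into a finite disjoint union of bounded convex components, and then to rule out the existence of more than one component using hypothesis (iv) and the geometry of $\{g=0\}$. By (ii), minimizers lie in $B_{R(m)}$; existence follows from BV compactness together with (i). The first step adapts McCann's planar argument \cite{MR1641031} to the present setting: in $\mathbb{R}^2$, replacing a connected component $K$ by its convex envelope and then homothetically rescaling to restore the mass gives a set with strictly smaller anisotropic perimeter unless $K$ was already convex. Convexity of sub-level sets (iii) is used in place of convexity of $g$ to guarantee that this rearrangement does not increase the potential energy. Thus one writes $E_m=K_1\sqcup\cdots\sqcup K_N$ with each $K_i$ bounded and convex.

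Step two is the main argument. Suppose $N\geq 2$; since at most one $K_i$ contains the origin, pick $K_j$ with $0\notin K_j$. If $|K_j\cap\{g\neq 0\}|>0$, hypothesis (iv) yields $w:=\int_{K_j}\nabla g\,dx\neq 0$. For $t>0$ small, the perturbed configuration $E'_t:=(E_m\setminus K_j)\cup(K_j - tw/|w|)$ has the same mass and surface energy (by translation invariance of $\mathcal{F}$ and disjointness from the remaining components), while (i) yields the first-order expansion
$$\mathcal{G}(E'_t)-\mathcal{G}(E_m) = -t|w| + o(t) < 0,$$
contradicting optimality. In the remaining case $K_j\subset\{g=0\}$ up to a null set, one has $\mathcal{G}(K_j)=0$, and the Euler--Lagrange equation for $E_m$ (a single Lagrange multiplier on all components) reduces on $\partial K_j$ to constant anisotropic mean curvature, so $K_j$ is a scaled translate of the Wulff shape. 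Since $\{g=0\}$ is convex and contains the origin (because $g(0)=0$), one slides $K_j$ along a straight-line path inside $\{g=0\}$ until it touches another component, and then a strict Wulff-isoperimetric merge produces a configuration of the same mass with strictly smaller $\mathcal{F}$ and unchanged $\mathcal{G}$, again contradicting optimality. Hence $N=1$ and $E_m$ is convex.

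The main obstacle I anticipate is the zero-set case $K_j\subset\{g=0\}$: one must locate a merging position inside the admissible region $\{g=0\}$ and verify strict improvement. Because the merged Wulff shape of area $|K_i|+|K_j|$ might not fit inside $\{g=0\}$, the merge must be executed \emph{locally} at the point of contact; one exploits the fact that two touching convex bodies in the plane can always be smoothed to strictly decrease their total anisotropic perimeter at fixed mass, and convexity of $\{g=0\}$ guarantees that this local smoothing stays in $\{g=0\}$ on the $K_j$ side without creating mass in $\{g>0\}$. A secondary technical point is verifying the convex-component decomposition under the weaker hypothesis (iii) rather than full convexity of $g$, which requires reworking McCann's planar rearrangement using only the level-set inequality implied by (iii).
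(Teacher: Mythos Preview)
Your overall architecture matches the paper's: first decompose $E_m$ into finitely many convex pieces with disjoint closures (the paper proves this as a separate result, Theorem~2.4, using precisely the McCann-type rearrangement and the convexity of sub-level sets you sketch), then rule out $N\ge 2$. Your treatment of the main case---some component $K_j$ with $0\notin K_j$ and $|K_j\cap\{g\neq 0\}|>0$---is essentially identical to the paper's. The paper phrases it as: minimality under small translations in every direction $\nu\in\mathbb{S}^1$ forces $\nu\cdot\int_{K_j}\nabla g\,dx\ge 0$ for all $\nu$, hence the integral vanishes, contradicting (iv). Your version picks the single direction $-w/|w|$ to produce a strict decrease; these are the same argument.

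The real divergence, and the genuine gap, is your degenerate case $K_j\subset\{g=0\}$. You invoke the Euler--Lagrange equation to deduce that $K_j$ has constant anisotropic mean curvature and is therefore a Wulff shape, then slide and perform a local merge at a contact point. But the theorem places no regularity hypothesis on $f$ beyond convexity and $1$-homogeneity; for crystalline or merely convex $f$ neither the first-variation identity in the classical sense nor the Alexandrov-type characterization of constant-curvature bodies as Wulff shapes is available. The sliding step is also unsafe: the target component may protrude outside $\{g=0\}$, so there is no guarantee $K_j$ can reach it while remaining in $\{g=0\}$, and the ``local smoothing'' you describe at the contact point is left unspecified. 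You flag this yourself as the main obstacle, and it is not resolved.

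The paper sidesteps all of this. If every component off the origin lies in $\{g=0\}$, take two components $A_j,A_l$ in $\{g=0\}$ (translating one if necessary so their closures touch), set $E=E_m\cup\mathrm{conv}(A_j\cup A_l)$, and observe that by convexity of $\{g=0\}$ one has $E\subset\{g=0\}$, while $\mathcal{F}(\mathrm{conv}(A_j\cup A_l))\le\mathcal{F}(A_j\cup A_l)$ (McCann, Corollary~2.8) together with inclusion--exclusion gives $\mathcal{F}(E)\le\mathcal{F}(E_m)$. Since $|E|>m$, contract by $a<1$ to restore mass $m$; then $\mathcal{E}(aE)=a\mathcal{F}(E)<\mathcal{F}(E_m)=\mathcal{E}(E_m)$, a contradiction. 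No Wulff identification, no sliding, no local surgery---just convex hull and a homothety. Replacing your case (b) with this argument closes the gap and requires nothing beyond what you already use in Step~1.
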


Note that if $g$ is convex, (i) is true. In addition, if $g$ is coercive, (ii) holds. In certain configurations, one may prove existence for non-coercive potentials, e.g. the gravitational potential \cite{MR732374, MR361996, MR420406}. Hence, (ii) is a natural assumption to ensure that the example in Theorem \ref{g} is precluded. In particular, since coercivity excludes the gravitational potential, a natural assumption is one that includes it; fortunately, it is not difficult to prove that (iv) includes the gravitational potential and therefore unifies the theory.

The result is the first theorem for $|\{g<\infty\}|=\infty$ with the weakest assumption formulated in Almgren's problem: convexity of sub-level sets.

If $g \in C^{1,\alpha}$ is strictly convex coercive, $f \in C^{3,\alpha}(\mathbb{R}^n\setminus \{0\})$ is uniformly elliptic, connectedness of equilibrium shapes was obtained by De Philippis and Goldman for $m \in (0,\infty)$ \cite{D}; if $n=2$ the strict convexity is not needed. Therefore, their theorem combined with McCann's theorem yields uniqueness and convexity in $\mathbb{R}^2$ subject to the regularity assumptions. If $g$ is convex, radial, and coercive, convexity of the equilibrium shapes for general convex surface tensions and $m \in (0,\infty)$ is a corollary of the above theorem without the regularity assumptions (Corollary \ref{ra}). 
\begin{cor}
If $n=2$ and\\ 
(i) $g=g(|x|)$ is locally Lipschitz \\
(ii) $g$ is coercive \\ 
(iii) when $g>0$, $g$ is (strictly) increasing \\
then $E_m$ is convex for all $m \in (0,\infty)$.
\end{cor}

\begin{cor}[The radial convex potential] \label{ra}
If $n=2$ and $g=g(|x|)$ is coercive and convex, then $E_m$ is convex for all $m \in (0,\infty)$.
\end{cor}

Assuming $g$ to be the gravitational potential, existence $\&$ convexity was proved by Baer in higher dimension with a structural assumption on the surface tension \cite{R}, cf. \cite{MR2126076, MR2321891, MR2349864, MR2875653, MR872883, MR493670, MR543126, MR543127, MR872883}. Assuming smoothness on the surface tension, he proved uniqueness. If $m$ is small, Corollary  \ref{@'w09} yields uniqueness for more general gravitational potentials and without the additional regularity assumption. 
In two dimensions, the above theorem implies convexity when the surface tension admits minimizers (e.g. $f$ is admissible \cite{R}). 

\begin{cor} \label{2-gr}
If $n=2$, $\phi(0)= 0$, $\phi'>0$, and\\ 
(i) 
\begin{equation*}  
g(x)=\begin{cases}
 \phi(x_2) & \text{if } x_2\ge 0\\
\infty  & \text{if }x_2<0
\end{cases}
\end{equation*}
(ii) $\mathcal{F}$ satisfies assumptions for the existence of minimizers $E_m \subset B_{R(m)}$ with $R \in L_{loc}^\infty(\mathbb{R}^+)$, \\

\noindent then $E_m$ is convex for all $m \in (0,\infty)$.
\end{cor}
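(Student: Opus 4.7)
The plan is to derive Corollary \ref{2-gr} as an immediate specialization of Theorem \ref{n=2} to the potential $g(x)=\phi(x_2)$ on $\{x_2\ge 0\}$, extended by $+\infty$ on $\{x_2<0\}$. Since $|\{g<\infty\}|=\infty$, the conclusion of Theorem \ref{n=2} will cover every $m\in(0,\infty)$, matching the statement. Accordingly, I would spend the entire argument verifying hypotheses (i)--(iv) of Theorem \ref{n=2} one by one for this specific $g$.

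Hypothesis (ii) is imposed verbatim as (ii) of the corollary. Hypothesis (i) reduces to $\phi$ being locally Lipschitz on $[0,\infty)$, which follows from the continuity of the positive derivative $\phi'$; composition with the projection $(x_1,x_2)\mapsto x_2$ preserves the local Lipschitz property on $\{g<\infty\}$. Hypothesis (iii) is immediate: since $\phi$ is strictly increasing with $\phi(0)=0$, for $t>0$ one has $\{g<t\}=\{(x_1,x_2):0\le x_2<\phi^{-1}(t)\}$, a convex horizontal strip, while for $t\le 0$ the set is empty.

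The only substantive step is (iv). On the upper half-plane $\nabla g(x)=(0,\phi'(x_2))$ almost everywhere, so for any bounded convex $E\subset\{x_2\ge 0\}$,
$$
\int_E \nabla g(x)\,dx \;=\; \Bigl(0,\;\int_E \phi'(x_2)\,dx\Bigr).
$$
The hypothesis $|E\cap\{g\neq 0\}|>0$ is equivalent to $|E\cap\{x_2>0\}|>0$, because the set $\{g=0\}\cap\{g<\infty\}=\{x_2=0\}$ is two-dimensionally negligible. Combined with $\phi'>0$ pointwise, the second component of the displayed vector is strictly positive, so $\int_E\nabla g\,dx\neq 0$ and (iv) holds. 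The extra hypothesis $0\notin E$ from Theorem \ref{n=2} is not actually used in this verification: the rigidity that $\nabla g$ points into a fixed half-space is what makes the computation unconditional.

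I do not anticipate any real obstacle. The only delicate point is the interpretation of the regularity packaged into ``$\phi'>0$'': if one reads this in the standard sense of a $C^1$ (or at least locally Lipschitz) function with strictly positive derivative, then (i) is automatic, the pointwise formula for $\nabla g$ used in (iv) is justified a.e., and the entire argument is simply an unfolding of Theorem \ref{n=2} in the gravitational coordinate system.
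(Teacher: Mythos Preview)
Your proposal is correct and matches the paper's approach: the corollary is presented as a direct specialization of Theorem \ref{n=2}, with the only nontrivial verification being condition (iv), handled exactly as you do via $\int_E \nabla g\,dx = \bigl(0,\int_E \phi'(x_2)\,dx\bigr)\neq 0$ from $\phi'>0$ and $|E\cap\{x_2>0\}|>0$. The paper's accompanying remark spells out this same computation in the classical case $\phi(x_2)=\alpha x_2$, and your argument is simply the general version.
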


Moreover, with convexity of sub-level sets, Theorem \ref{mi} yields that minimizers are a finite union of convex sets with disjoint closures. Therefore, this contains the geometry in McCann's result when $g$ is assumed convex with bounded zero set. In my proofs of Theorems  \ref{n=2} $\&$ \ref{q} (and Corollary \ref{m+e}), Theorem \ref{mi} is an important technical tool.

The technique implies the following: if $A \subset \mathbb{R}^2$ is bounded, convex and 
$$\mathcal{F}(E_m)=\inf\{\mathcal{F}(E): E\subset A, |E|=m\},$$ with $|K_a|<m\le |A|$ $\&$ $|K_a|$ the measure of the largest Wulff shape in $A$,  then $E_m$ is convex. Interestingly, understanding the convexity in the case of the isotropic perimeter was the main objective in \cite{MR1669207} $\&$ the problem has appeared in a few subsequent papers: \cite[Theorem 3.32.]{MR1669207}, \cite[Theorem 11]{MR2178065}, \cite[(1.8)]{MR2436794}, and \cite[Remark 2.6.]{MR2468216}.
One feature is to investigate the problem within a wider framework also inclusive of the Cheeger constant of a fixed domain. 
Supposing the surface tension to be even $\&$ a special condition involving $A$ and $f$, convexity for an interval was proved when $n \ge 2$ in \cite[Theorem 6.5.]{MR2436794} (cf.  \cite{MR3719067}).
Similar problems are investigated in convergence of curvature flows \cite{MR2248685, MR3981988, MR4025327, MR2558422, MR2208291, MR1205983, MR1078266, MR1087347}.

\section{Main theorems}

\subsection{Proof of Theorem \ref{g}}

\begin{proof}
Define
\begin{equation*}  
g(x,y)=\begin{cases}
x^2(1-y)+x^2y^2 & \text{if } y\le0\\
\frac{x^2}{1+y}  & \text{if } y > 0.
\end{cases}
\end{equation*}
Note that 
\begin{equation*}  
D^2g(x,y)=\begin{cases}
\begin{pmatrix}
  2(1-y+y^2) & 4xy-2x \\
  4xy-2x & 2x^2
 \end{pmatrix}
& \text{if } y\le0\\
\begin{pmatrix}
  \frac{2}{1+y} & -\frac{2x}{(1+y)^2} \\
  -\frac{2x}{(1+y)^2} & \frac{2x^2}{(1+y)^3}
 \end{pmatrix}
 & \text{if } y > 0;\\
\end{cases}
\end{equation*}
let $\lambda_1, \lambda_2$ be the eigenvalues of $D^2g(x,y)$, 
\begin{equation*}  
\lambda_1(x,y)\lambda_2(x,y)=\det(D^2g(x,y))=\begin{cases}
12x^2y(1-y)
& \text{if } y\le0\\
0
 & \text{if } y > 0,
\end{cases}
\end{equation*}
\begin{equation*}  
\lambda_1(x,y)+\lambda_2(x,y)=\begin{cases}
2(1-y+y^2+x^2) 
& \text{if } y\le0\\
 \frac{2}{1+y}\Big(1 + \frac{x^2}{(1+y)^2}\Big)
 & \text{if } y > 0.\\
\end{cases}
\end{equation*}   
\begin{figure}[htbp]
\caption{$g$}
\label{f211}
\centering
\includegraphics[width=.6 \textwidth]{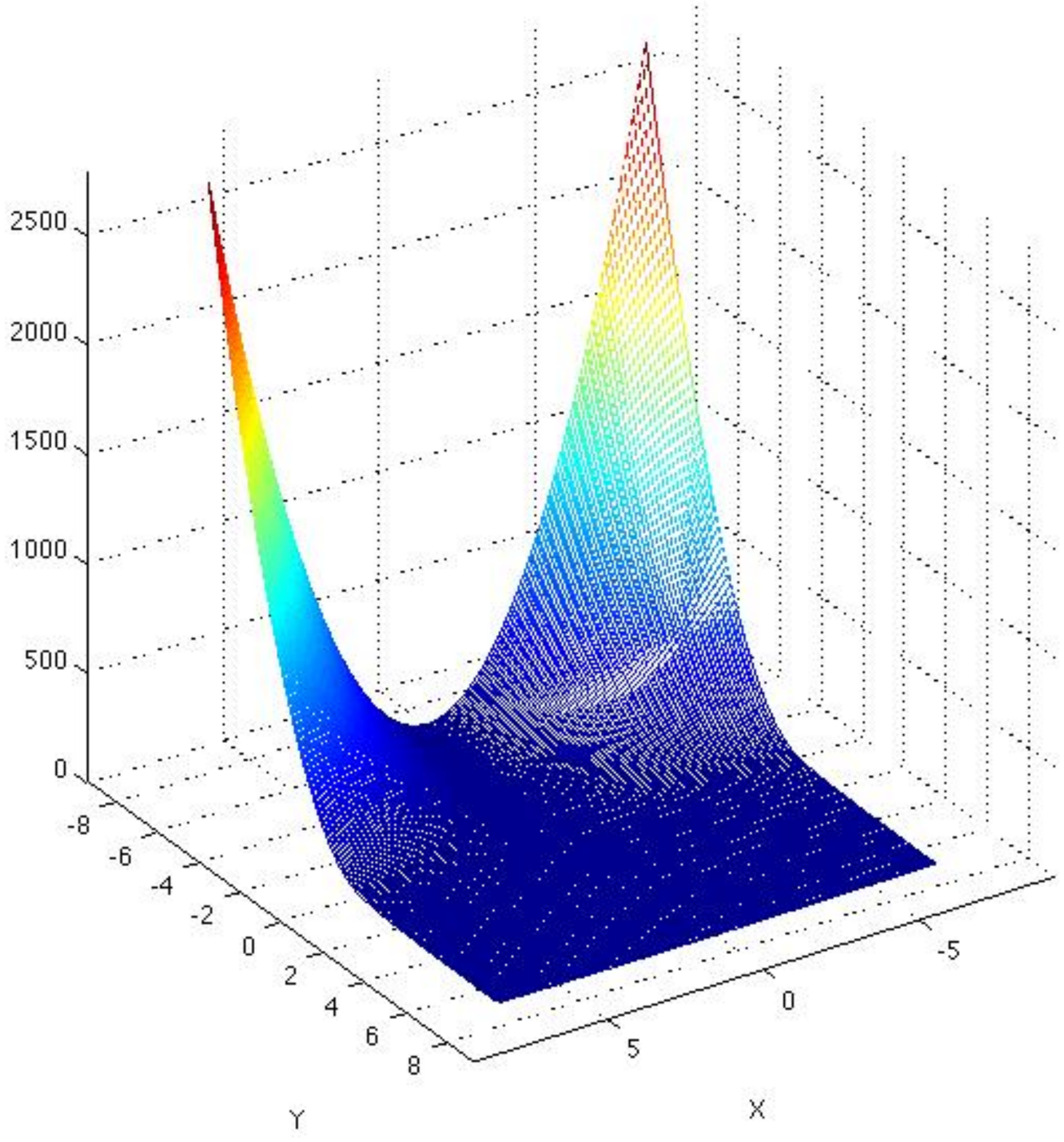}
\end{figure}

In particular, 

$$
\lambda_1(x,y), \lambda_2(x,y) \ge 0.
$$
\\
Hence, $D^2g(x,y)$ is a real non-negative Hermitian matrix and therefore can be diagonalized by a non-negative diagonal matrix $\Lambda(x,y)$ and a real orthogonal matrix $O(x,y)$ which has as columns real eigenvectors: 
$$
D^2g(x,y)=O(x,y)\Lambda(x,y) O(x,y)^T.
$$
\\
\noindent Assume $w \in \mathbb{R}^2 \setminus \{0\}$ and set $z=O(x,y)^T w$;
\begin{align*}
\langle D^2g(x,y) w, w \rangle&= \langle O(x,y)\Lambda(x,y) O(x,y)^T w, w \rangle\\
&=\langle \Lambda(x,y) O(x,y)^T w, O(x,y)^T w \rangle \\
&= \lambda_1(x,y)|z_1|^2+\lambda_2(x,y)|z_2|^2 \ge 0
\end{align*}
and this implies that $g$ is convex, see Figure \ref{f211}.
Set  $e_2=(0,1)$, $a>0$; the potential is non-increasing in the $y$-variable and strictly decreasing if $x\neq 0:$
$$
\partial_y g(x,y)=
\begin{cases}
x^2(-1+2y)
& \text{if } y\le0\\
-\frac{x^2}{(1+y)^2}
 & \text{if } y > 0;\\
\end{cases}
$$
in particular, if a minimizer $E_m$ exists,
$$
\int_{E_m+ae_2} g(x,y) dxdy < \int_{E_m} g(x,y) dxdy, 
$$
\begin{figure}[htbp]
\centering
\includegraphics[width=.6 \textwidth]{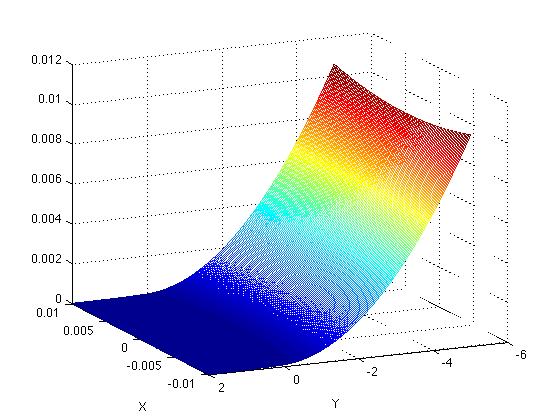}
\caption{$g^\epsilon$}
\label{a12}
\end{figure}

\noindent which yields
$$
\mathcal{E}(E_m+ae_2)< \mathcal{E}(E_m),
$$
a contradiction.
Observe that one may also concatenate another function at $y=0$: let $\epsilon>0$ $\&$ define 
$$
g^\epsilon(x,y)=
\begin{cases}
x^2(1-y)+\frac{\epsilon}{2}y^2
& \text{if } -\sqrt{\frac{\epsilon}{2}} \le x \le \sqrt{\frac{\epsilon}{2}}, y \le 0\\
\frac{x^2}{1+y}
 & \text{if } -\sqrt{\frac{\epsilon}{2}} \le x \le \sqrt{\frac{\epsilon}{2}}, y >0\\
\end{cases}
$$
$\&$ extend $g^\epsilon$ by a convex envelope, see Figure \ref{a12}. 
\end{proof}

\begin{rem}
Note that to apply sharp theorems on the extension of $g^\epsilon$, the corners of the domain can be smoothed and the extension then inherits up to $C_{loc}^{1,1}$ regularity \cite{MR3324933} (an application to PDEs is given in \cite{MR3933403}). 
\end{rem}

\subsection{A stability theorem}

The counterexample shows that in a general context, more assumptions are necessary to ensure existence. A well-known assumption is coercivity. Nevertheless, coercivity excludes the gravitational potential. In order to obtain convexity without relying on constrained settings for existence, {\bf g admits minimizers} is defined to mean any assumption generating a minimizer: one may e.g. obtain existence with $\mathcal{F}=\mathcal{H}^{n-1}$ and the gravitational potential utilizing Steiner symmetrization \cite{MR2178968, MR3055761} to avoid a sequence of sets escaping to infinity to prevent a compactness argument. Other situations specific to the potential or surface tension likewise may generate existence although not included under coercivity.

\begin{thm} \label{@'}
Suppose $g \in L_{loc}^\infty(\{g<\infty\})$ admits minimizers $E_m \subset B_{R}$ if $m$ is small. For all  $\epsilon>0$ there exists $c_\epsilon>0$, $a=a(\cdot, \epsilon)$,  $m_0>0$ such that
$$
\inf_{y>0} \frac{a(y,\epsilon)}{y^{\frac{n-1}{n}}} \ge c_\epsilon,
$$ 
$\&$ for all $E \subset B_R$, $|E|=|E_m|=m<m_0$, if
$$
|\mathcal{E}(E_m)-\mathcal{E}(E)| < a(m,\epsilon),
$$
there exists $x_0$ such that  
$$
\frac{|(E+x_0) \Delta E_m|}{|E_m|} < \epsilon.
$$
\end{thm}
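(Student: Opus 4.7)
The plan is to leverage the quantitative anisotropic isoperimetric inequality of Figalli-Maggi-Pratelli together with the fact that, at small mass $m$, the potential energy $\mathcal{G}$ is of strictly lower order than the surface energy $\mathcal{F}$. Writing $r(m)=(m/|K|)^{1/n}$ so that $|r(m)K|=m$, the quantitative Wulff inequality furnishes a constant $C>0$ (depending only on $n$ and $f$) with
$$
\mathcal{F}(F) - n|K|^{1/n}m^{(n-1)/n} \ge C\, m^{(n-1)/n}\, A_f(F)^2
$$
for every set $F$ of finite perimeter with $|F|=m$, where
$$
A_f(F)=\inf_{x\in\mathbb{R}^n}\frac{|F\,\Delta\,(x+r(m)K)|}{m}
$$
is the anisotropic Fraenkel asymmetry. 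I plan to extract smallness of $A_f(E)$ and of $A_f(E_m)$ separately from the energy-closeness hypothesis and then deduce the conclusion via the triangle inequality for symmetric differences with $r(m)K$ as an intermediary.

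Because $g(0)=0$ and $g\in L^\infty_{loc}(\{g<\infty\})$, there is a neighborhood of $0$ on which $g\le M$; for $m<m_0$ with $m_0$ small, $r(m)K$ lies in that neighborhood, so testing the minimality of $E_m$ against $r(m)K$ gives
$$
\mathcal{F}(E_m) - n|K|^{1/n}m^{(n-1)/n} \le \mathcal{G}(r(m)K)\le M\,m.
$$
Combining $\mathcal{F}(E)\le\mathcal{E}(E)<\mathcal{E}(E_m)+a(m,\epsilon)$ with the preceding bound on $\mathcal{E}(E_m)$ yields
$$
\mathcal{F}(E) - n|K|^{1/n}m^{(n-1)/n} \le M\,m + a(m,\epsilon).
$$
Inserting both estimates into the quantitative Wulff inequality and dividing by $C\, m^{(n-1)/n}$ bounds $A_f(E_m)^2$ by a term of order $m^{1/n}$ and $A_f(E)^2$ by a term of order $m^{1/n}$ plus $a(m,\epsilon)/(C\,m^{(n-1)/n})$. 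The natural choice is therefore $a(m,\epsilon):=c_\epsilon\, m^{(n-1)/n}$ for a small constant $c_\epsilon>0$ depending on $\epsilon$, $n$, and $f$, which automatically gives $\inf_{y>0}a(y,\epsilon)/y^{(n-1)/n}=c_\epsilon$. Shrinking $m_0$ so that the $O(m^{1/n})$ contribution also falls below a suitable fraction of $\epsilon^2$ renders both $A_f(E_m)$ and $A_f(E)$ less than $\epsilon/4$.

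Finally, selecting near-optimal translations $x_E, x_{E_m}\in\mathbb{R}^n$ with $|E\,\Delta\,(x_E+r(m)K)|$ and $|E_m\,\Delta\,(x_{E_m}+r(m)K)|$ each at most $(\epsilon/2)m$, setting $x_0:=x_{E_m}-x_E$, and translating by $-x_{E_m}$ turns the triangle inequality for symmetric differences into
$$
\frac{|(E+x_0)\,\Delta\,E_m|}{m}\le\frac{|(E-x_E)\,\Delta\,r(m)K|}{m}+\frac{|r(m)K\,\Delta\,(E_m-x_{E_m})|}{m}<\epsilon,
$$
which is the desired conclusion. The chief obstacle is not analytic but constant bookkeeping: $m_0$ must simultaneously ensure that $r(m)K$ lies inside the neighborhood where $g$ is bounded, that $\mathcal{G}(r(m)K)$ is subleading compared to $m^{(n-1)/n}$, and that both asymmetries sit below $\epsilon/4$. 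The essential analytic input is $\mathcal{G}(r(m)K)=O(m)=o(m^{(n-1)/n})$ as $m\to 0$, precisely what local boundedness of $g$ near $0$ affords.
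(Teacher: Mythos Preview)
Your argument is correct and in fact cleaner than the paper's. The paper proceeds by contradiction: assuming no admissible pair $(c_\epsilon,m_0)$ exists, it manufactures sequences $m_k\to 0$, $c_k\to 0$, rescales $E_{m_k}$ and the comparison sets $E_{m_k}'$ to volume $|K|$, shows both isoperimetric deficits tend to zero (quoting Figalli--Maggi's Corollary~2 for the minimizers and estimating $|\mathcal{F}(\gamma_k E_{m_k}')-\mathcal{F}(\gamma_k E_{m_k})|$ via the $L^\infty$ bound on $g$), and then invokes a compactness argument (with the Figalli--Maggi--Pratelli stability inequality mentioned only as an alternative in a footnote) to force both rescaled sets close to $K$, contradicting the assumed lower bound on the asymmetry.

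You instead go directly through the quantitative Wulff inequality: by testing $E_m$ against the rescaled Wulff shape $r(m)K$ you get $\mathcal{F}(E_m)-n|K|^{1/n}m^{(n-1)/n}\le Mm$, and the energy hypothesis transfers this, up to $a(m,\epsilon)$, to $E$. This yields explicit bounds $A_f(E_m)^2\lesssim m^{1/n}$ and $A_f(E)^2\lesssim m^{1/n}+c_\epsilon$, from which the choice $a(m,\epsilon)=c_\epsilon m^{(n-1)/n}$ and the triangle inequality finish the proof. Your route is constructive (one can read off $c_\epsilon$ and $m_0$ in terms of the FMP constant and $\sup g$ near $0$) and avoids both the contradiction scaffolding and the compactness step; the paper's route, by contrast, makes the role of compactness explicit and would still go through if only a qualitative (non-quantitative) stability statement were available. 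One small point: your sentence ``there is a neighborhood of $0$ on which $g\le M$'' tacitly uses that $\{g<\infty\}$ contains an open neighborhood of the origin; if one is being careful this should be stated, or one can equally well use any small translated Wulff shape sitting inside $\{g<\infty\}\cap B_R$ as the competitor.
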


\begin{proof}
Assume the theorem is false. Then there exists $\epsilon>0$ such that for all $c>0$, $a(\cdot, \epsilon)$ such that
$$
\inf_{y>0} \frac{a(y,\epsilon)}{y^{\frac{n-1}{n}}} \ge c;
$$
for all $m_0 \in (0,\infty)$ there exist $m<m_0$ and  minimizers $E_{m} \subset B_R$ \& sets $E_{m}' \subset B_R$ with  
$$|E_{m}|=|E_{m}'|=m$$ 
such that
$$
|\mathcal{E}(E_m)-\mathcal{E}(E_{m}')| < a(m,\epsilon)
$$ 
and

$$
\inf_{x_0 \in \mathbb{R}^n} \frac{|(E_{m}'+x_0) \Delta E_{m}|}{|E_{m}|} \ge \epsilon>0.
$$
Let $w_k \rightarrow 0^+$, $q$ a modulus of continuity ($q(0^+)=0$), and define $c_k=w_k q(\epsilon)$,
$a_k(y,\epsilon)=c_k y^{\frac{n-1}{n}}$; now, 
$$
\inf_{y>0} \frac{a_k(y,\epsilon)}{y^{\frac{n-1}{n}}} = c_k
$$
and selecting $m_0=\frac{1}{k}$ for $k \in \mathbb{N}$, there exist minimizers $E_{m_k}\subset B_R$ and sets $E_{m_k}' \subset B_R$, $|E_{m_k}|=|E_{m_k}'|=m_k<\frac{1}{k}$ such that
$$
|\mathcal{E}(E_{m_k})-\mathcal{E}(E_{m_k}')| < a_k(m_k,\epsilon),
$$ 
and
$$
\inf_{x_0 \in \mathbb{R}^n} \frac{|(E_{m_k}'+x_0) \Delta E_{m_k}|}{|E_{m_k}|} \ge \epsilon>0.
$$
Set $a_k=a_k(m_k,\epsilon)$, $\gamma_k=(\frac{|K|}{m_k})^{\frac{1}{n}}$,

$$|\gamma_k E_{m_k}|=|K|;$$  
since $m_k \rightarrow 0$, 

$$
\delta(\gamma_k E_{m_k}):=\frac{\mathcal{F}(\gamma_k E_{m_k})}{n|K|^{\frac{1}{n}}|\gamma_k E_{m_k}|^{\frac{n-1}{n}}} - 1 \rightarrow 0
$$

(via e.g. Corollary 2 in Figalli and Maggi \cite[p. 176]{MR2807136}). 

By the triangle inequality,

\begin{align*}
|\mathcal{F}(E_{m_k}')&-\mathcal{F}(E_{m_k})|\\
&=|[\mathcal{E}(E_{m_k}')-\mathcal{E}(E_{m_k})]+[\int_{E_{m_k}} g(x)dx-\int_{E_{m_k}'} g(x)dx]|\\
&\le |\mathcal{E}(E_{m_k}')-\mathcal{E}(E_{m_k})|+\int g(x)|\chi_{E_{m_k}'}-\chi_{E_{m_k}}| dx\\
&<a_k+\int_{E_{m_k}' \Delta E_{m_k}} g(x)dx.
\end{align*}

Multiplying both sides by $\gamma_k^{n-1}$, 

\begin{align*}
|\mathcal{F}(\gamma_kE_{m_k}')&-\mathcal{F}(\gamma_kE_{m_k})|\\
&<\gamma_k^{n-1}a_k+2\gamma_k^{n-1} m_k(\sup_{B_R \cap \{g<\infty\}} g) \\
&=|K|^{\frac{n-1}{n}}\frac{a_k}{m_k^{\frac{n-1}{n}}}+2|K|^{\frac{n-1}{n}}(\sup_{B_R \cap \{g<\infty\}} g) m_k^{1-\frac{n-1}{n}}
\end{align*}

and since $a_k=a_k(m_k, \epsilon)=c_k m_k^{\frac{n-1}{n}}=w_k q(\epsilon)m_k^{\frac{n-1}{n}}$,

$$
\frac{a_k}{m_k^{\frac{n-1}{n}}}=w_k q(\epsilon) \rightarrow 0
$$

$$
 |\mathcal{F}(\gamma_kE_{m_k}')-\mathcal{F}(\gamma_kE_{m_k})| \rightarrow 0
$$

\begin{align*}
\delta(\gamma_k E_{m_k}') & \le |\delta(\gamma_k E_{m_k}')-\delta(\gamma_k E_{m_k})|+\delta(\gamma_k E_{m_k})\\
&= \frac{1}{n|K|}|\mathcal{F}(\gamma_kE_{m_k}')-\mathcal{F}(\gamma_kE_{m_k})|+\delta(\gamma_k E_{m_k})\\
&\hskip .15in \rightarrow 0.
\end{align*}

Hence, there exist $x_k, x_k' \in \mathbb{R}^n$ such that

$$
\frac{|(\gamma_kE_{m_k}+x_k) \Delta K|}{|\gamma_k E_{m_k}|} \rightarrow 0,
$$
\&
$$
\frac{|(\gamma_kE_{m_k}'+x_k') \Delta K|}{|\gamma_k E_{m_k}'|} \rightarrow 0
$$

via a compactness argument \footnote{or, the stability of the anisotropic isoperimetric inequality \cite{MR2672283}}:
if this is not true, then up to a subsequence

$$
\inf_x \frac{|(\gamma_kE_{m_k}+x) \Delta K|}{|\gamma_k E_{m_k}|} \ge a>0;
$$
let  $E_k:=\gamma_k E_{m_k}$ $\&$ observe

$$
\sup_k \mathcal{F}(E_k)<\infty,
$$
hence up to a subsequence, $E_k \rightarrow E$ in $L_{loc}^1$, $|E|=|E_k|=|K|$,
$$
\mathcal{F}(E) \le \liminf_k \mathcal{F}(E_k)=\mathcal{F}(K) \le \mathcal{F}(E)
$$ 
via the anisotropic isoperimetric inequality. Therefore, there exists $x \in \mathbb{R}^n$ such that
$$
0<a \le \frac{|(\gamma_kE_{m_k}+x) \Delta K|}{|\gamma_k E_{m_k}|} \rightarrow \frac{|(E+x) \Delta K|}{|K|} =0,
$$
a contradiction. Similarly, a symmetric argument implies
$$
\frac{|(\gamma_kE_{m_k}'+x_k') \Delta K|}{|\gamma_k E_{m_k}'|} \rightarrow 0
$$
and this yields $k \in \mathbb{N}$ (via the triangle inequality in $L^1$ applied to characteristic functions) such that 
$$
\frac{|(E_{m_k}'+\frac{(x_k'-x_k)}{\gamma_k}) \Delta E_{m_k}|}{|E_{m_k}|}<\epsilon,
$$
a contradiction to
$$
\inf_{x_0 \in \mathbb{R}^n} \frac{|(E_{m_k}'+x_0) \Delta E_{m_k}|}{|E_{m_k}|} \ge \epsilon>0.
$$
\end{proof}

\begin{rem}
The result may also be extended to $g \in L_{loc}^1(\{g<\infty\})$ subject to some assumptions. In the extension, Lebesgue's differentiation theorem is utilized.
\end{rem}

\begin{cor} \label{@'w}
Suppose $g \in L_{loc}^\infty$ is coercive. For all  $\epsilon>0$ there exists $c_\epsilon>0$, $a=a(\cdot, \epsilon)$,  $m_0>0$ such that
$$
\inf_{y>0} \frac{a(y,\epsilon)}{y^{\frac{n-1}{n}}} \ge c_\epsilon,
$$ 
$\&$ for a minimizer $E_m \subset B_R$, $E \subset B_R$, $|E|=|E_m|=m<m_0$, if
$$
|\mathcal{E}(E_m)-\mathcal{E}(E)| < a(m,\epsilon),
$$
there exists $x_0$ such that  
$$
\frac{|(E+x_0) \Delta E_m|}{|E_m|} < \epsilon.
$$
Therefore, if $m<m_0$, $E_m$ is (mod translations and sets of measure zero) unique.
\end{cor}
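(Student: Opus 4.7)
The plan is to deduce Corollary \ref{@'w} directly from Theorem \ref{@'}, once I verify that coercivity supplies its hypotheses for every sufficiently small mass. The condition $g \in L_{loc}^\infty(\{g<\infty\})$ is immediate from $g \in L_{loc}^\infty$. The real task is to produce, for every $m$ below some threshold $m_0$, a minimizer $E_m$ contained in a ball $B_R$ with $R$ independent of $m$.

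For existence I would invoke the direct method. A small Wulff shape of mass $m$ placed at the origin (where $g(0)=0$ controls the potential energy by $m\sup_{B_1} g$) serves as a competitor, yielding an upper bound $\mathcal{E}(E_m) \le C_1 m^{(n-1)/n} + C_2 m$ along any minimizing sequence. Coercivity provides an $R_M$ with $\inf_{|x|>R_M} g \ge M$ for every $M$, so $M\,|E \cap B_{R_M}^c| \le \int_E g \le \mathcal{E}(E)$ for every $E$ in the sequence; choosing $M$ above twice the upper bound yields tightness. Lower semicontinuity of $\mathcal{F}$ under $L^1_{loc}$ convergence together with $BV$ compactness then produces a minimizer $E_m$, and a truncation--replacement comparison (cut away the portion of $E_m$ outside $B_{R_0}$ and reinstall that mass as a small set near the origin) forces $E_m \subset B_{R_0}$ for $R_0$ depending only on $\sup_{B_1} g$, $\sup_{\mathbb{S}^{n-1}} f$, and the coercivity profile of $g$; shrinking $m_0$ permits $R_0=R$ to be chosen uniformly in $m \in (0,m_0)$.

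With these verifications in hand, the quantitative part of the corollary is exactly the conclusion of Theorem \ref{@'} applied in $B_R$. For the final uniqueness claim, let $E_m,E_m'$ be two minimizers of mass $m<m_0$. Both lie in a common $B_R$ by the preceding step and satisfy $\mathcal{E}(E_m)=\mathcal{E}(E_m')$, so for every $\epsilon>0$,
\[
|\mathcal{E}(E_m)-\mathcal{E}(E_m')|=0<a(m,\epsilon),
\]
and Theorem \ref{@'} yields $x_\epsilon \in \mathbb{R}^n$ with $|(E_m'+x_\epsilon)\Delta E_m|<\epsilon |E_m|$. Since both sets live in $B_R$, only $|x_\epsilon|\le 2R$ can give non-disjoint translates, so the $x_\epsilon$ stay bounded. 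Extracting $x_{\epsilon_k}\to x_\ast$ and using $L^1$-continuity of $y\mapsto \chi_{E_m'+y}$ gives $|(E_m'+x_\ast)\Delta E_m|=0$, which is uniqueness modulo translations and null sets.

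The principal obstacle I anticipate is the uniform inclusion $E_m \subset B_R$ with $R$ independent of $m$: for each fixed $m$ coercivity supplies a qualitative bound, but keeping $R$ from blowing up as $m\to 0^+$ requires the quantitative truncation--replacement comparison above. Once that is in place, the rest is a direct application of the preceding theorem together with the bounded-translations compactness argument for the passage $\epsilon\to 0$.
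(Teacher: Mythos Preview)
Your argument is correct and is exactly the route the paper has in mind: the paper gives no separate proof of this corollary, treating it as an immediate consequence of Theorem~\ref{@'} once one notes that coercivity supplies bounded minimizers for small mass. Your existence sketch via the direct method plus truncation, and your compactness argument on the translations $x_\epsilon$ to pass from the $\epsilon$-stability to exact uniqueness, are the standard details that the paper leaves implicit.
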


\begin{rem} 
Suppose $g \in L_{loc}^\infty$ $\&$ $n=2$, then there exists $m_0>0$ such that for $m<m_0$, $E_m$ is unique and convex (via combining Corollary \ref{@'w} with Theorem 1 in Figalli and Maggi).
\end{rem}

\begin{rem} \label{Qzn}
Assuming $g$ is coercive, convex (strictly for $n >2$), and additional regularity, 
De Philippis and Goldman \cite{D} proved minimizers are connected and therefore if $n=2$, convex $\&$ unique via McCann \cite{MR1641031}. In the theorem, I obtain uniqueness for $m$ small without regularity, a convexity assumption, and displacement interpolation introduced in  \cite{Mcq, Mc, MR1641031}; and for $n\ge2$. This also implies a new proof of convexity when $n=2$ in the convex case: De Philippis and Goldman in particular prove that there exist convex minimizers for $m \in \mathbb{R}^+$ \cite[Corollary 1.2., Remark 1.3.]{D} and if $m<m_0$, Theorem \ref{@'} implies, up to translations and sets of measure zero, that no others exist. 

The small mass convexity for $n=2$ has already been proven in Figalli and Maggi's Theorem 1. The result in the corollary improves the convexity with the uniqueness. To the best of my knowledge, this is the first uniqueness and convexity result for small mass with merely $g \in L_{loc}^\infty(\mathbb{R}^2)$ and general convex surface tension. If $f$ is crystalline, Figalli and Zhang obtained the geometry of minimizers in higher dimension: for sufficiently small mass minimizers are polyhedra \cite{pFZ}. Assuming a stability condition, Corollary \ref{m_s} implies convexity for small mass with $g \in L_{loc}^\infty(\mathbb{R}^n).$
\end{rem}

Also, new results on the gravitational potential are obtained via the theorem.

\begin{cor} \label{@'w09}
Suppose $0 \le \phi \in L_{loc}^\infty([0,\infty))$ and\\ 
(i) 
\begin{equation*}  
g(x)=\begin{cases}
 \phi(x_n) & \text{if } x_n\ge 0\\
\infty  & \text{if }x_n<0
\end{cases}
\end{equation*}
(ii) $\mathcal{F}$ satisfies assumptions for the existence of minimizers $E_m \subset B_{R(m)}$ with $R \in L_{loc}^\infty(\mathbb{R}^+)$,\\ 

\noindent then if $m>0$ is small, $E_m$ is unique. Moreover, for all  $\epsilon>0$ there exists $c_\epsilon>0$, $a=a(\cdot, \epsilon)$,  $m_0>0$ such that
$$
\inf_{y>0} \frac{a(y,\epsilon)}{y^{\frac{n-1}{n}}} \ge c_\epsilon,
$$ 
$\&$ for $E \subset B_R$, $|E|=|E_m|=m<m_0$, if
$$
|\mathcal{E}(E_m)-\mathcal{E}(E)| < a(m,\epsilon),
$$
there exists $x_0$ such that  
$$
\frac{|(E+x_0) \Delta E_m|}{|E_m|} < \epsilon.
$$
\end{cor}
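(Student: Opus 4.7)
The approach is to invoke Theorem \ref{@'} directly after verifying its hypotheses. The effective domain $\{g<\infty\}$ is the closed upper half-space $H=\{x_n \ge 0\}$, on which $g(x)=\phi(x_n)$; since $\phi \in L_{loc}^\infty([0,\infty))$, any compact subset of $H$ yields a finite supremum of $g$, so $g \in L_{loc}^\infty(\{g<\infty\})$. Hypothesis (ii) provides $E_m \subset B_{R(m)}$ with $R \in L_{loc}^\infty(\mathbb{R}^+)$, which supplies a uniform $R_0$ on any compact subinterval of the small-mass range, matching the second hypothesis of Theorem \ref{@'}.

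With both hypotheses in place, Theorem \ref{@'} applies verbatim and delivers the stability estimate stated in the corollary. For the uniqueness conclusion: if $E_m$ and $E_m'$ are both minimizers of the same mass $m<m_0$, then $|\mathcal{E}(E_m)-\mathcal{E}(E_m')|=0<a(m,\epsilon)$ for every $\epsilon>0$, so there exists $x_0=x_0(\epsilon)$ with $|(E_m'+x_0)\Delta E_m|/|E_m|<\epsilon$. Passing to a sequence $\epsilon_k \to 0$ and using the horizontal translation invariance $g(x+h)=g(x)$ for $h \in \mathbb{R}^{n-1}\times\{0\}$ to keep translates in a fixed ball, one extracts, via $L^1$ compactness of sets of finite perimeter, a single $\bar x$ with $E_m'=E_m+\bar x$ up to a null set.

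The principal obstacle is ensuring a uniform ambient radius as $m \to 0^+$, since $L_{loc}^\infty(\mathbb{R}^+)$ does not by itself control $R(m)$ in that limit. I would address this by using the scaled Wulff shape $K_m=(m/|K|)^{1/n}K$ placed just above $\{x_n=0\}$ as a competitor: because $\mathcal{G}(K_m) \le m \sup_{[0,\,c\, m^{1/n}]} \phi$ is negligible at small mass, minimality yields $\mathcal{F}(E_m) = n|K|^{1/n} m^{(n-1)/n}(1+o(1))$. After horizontally centering $E_m$ via the invariance of $g$, near-optimality in the anisotropic isoperimetric inequality (Corollary 2 of Figalli and Maggi \cite{MR2807136}) forces $E_m$ to lie in a ball $B_R$ with $R$ independent of $m$ for all small $m$, completing the reduction to Theorem \ref{@'}.
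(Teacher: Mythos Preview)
Your proposal is correct and matches the paper's approach: the corollary is stated in the paper without a separate proof, intended as a direct application of Theorem~\ref{@'} (parallel to Corollary~\ref{@'w}, where the uniqueness-from-stability step is made explicit). Your verification that $g\in L_{loc}^\infty(\{g<\infty\})$ and your derivation of uniqueness by applying the stability estimate with $\epsilon\to 0$ are exactly what is implicit in the paper. The extra paragraph you supply---using horizontal translation invariance and a Wulff-shape competitor to secure a uniform ambient radius as $m\to 0^+$---addresses a point the paper does not spell out (since $R\in L_{loc}^\infty(\mathbb{R}^+)$ alone does not control $R(m)$ near $0$); this additional care is reasonable, though note that for the uniqueness step itself the translations $x_0(\epsilon)$ are already bounded once $E_m,E_m'\subset B_R$, so the invariance is only needed for normalizing the radius, not for the limiting argument.
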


\begin{cor} \label{q,}
Suppose $\alpha>0$,\\ 
(i) 
\begin{equation*}  
g(x)=\begin{cases}
 \alpha x_n & \text{if } x_n\ge 0\\
\infty  & \text{if }x_n<0
\end{cases}
\end{equation*}
(ii) $f$ is admissible \cite{R},\\ 

\noindent then if $m>0$ is small, $E_m$ is unique. Moreover, for all  $\epsilon>0$ there exists $c_\epsilon>0$, $a=a(\cdot, \epsilon)$,  $m_0>0$ such that
$$
\inf_{y>0} \frac{a(y,\epsilon)}{y^{\frac{n-1}{n}}} \ge c_\epsilon,
$$ 
$\&$ for $E \subset B_R$, $|E|=|E_m|=m<m_0$, if
$$
|\mathcal{E}(E_m)-\mathcal{E}(E)| < a(m,\epsilon),
$$
there exists $x_0$ such that  
$$
\frac{|(E+x_0) \Delta E_m|}{|E_m|} < \epsilon.
$$
\end{cor}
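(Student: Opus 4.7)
The plan is to deduce Corollary \ref{q,} directly from Corollary \ref{@'w09} by verifying its two hypotheses for the linear gravitational potential $\phi(t) = \alpha t$. First I would observe that $\phi(t)=\alpha t$ with $\alpha>0$ is continuous on $[0,\infty)$, hence belongs to $L^\infty_{loc}([0,\infty))$, so hypothesis (i) of Corollary \ref{@'w09} is immediate: the $g$ in Corollary \ref{q,} is exactly the one obtained from $\phi$ in that corollary.

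Second, I would verify hypothesis (ii), namely the existence of minimizers $E_m \subset B_{R(m)}$ with $R \in L^\infty_{loc}(\mathbb{R}^+)$. This is precisely what Baer's framework for admissible surface tensions provides: under the admissibility assumption on $f$ (cited as \cite{R}), the minimization problem $\inf\{\mathcal{E}(E):|E|=m\}$ admits a minimizer for every $m>0$, and the minimizers are contained in a ball whose radius is controlled locally uniformly in $m$. The Steiner symmetrization argument mentioned in the paragraph preceding Theorem \ref{@'}, together with the halfspace constraint $\{x_n \ge 0\}$ enforced by $g=\infty$ on $\{x_n<0\}$, prevents escape to infinity in the horizontal directions and yields a minimizing sequence relatively compact in $L^1$, from which a lower semicontinuity argument for $\mathcal{F}+\mathcal{G}$ produces a minimizer inside a ball $B_{R(m)}$. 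The local boundedness of $R$ in $m$ follows from the monotonicity of energies and the standard diameter bound for admissible tensions.

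Having verified both hypotheses, I would invoke Corollary \ref{@'w09} directly: it gives the existence of $m_0>0$ such that $E_m$ is unique (modulo translations and null sets) for $m<m_0$, together with the quantified stability inequality
\[
\inf_{y>0} \frac{a(y,\epsilon)}{y^{(n-1)/n}} \ge c_\epsilon,
\]
and the corresponding symmetric-difference estimate. Translating these back into the notation of Corollary \ref{q,} completes the proof; no new analytic input is required beyond Baer's existence theorem and Corollary \ref{@'w09}.

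The only non-routine step I anticipate is the verification that Baer's admissibility hypothesis genuinely yields a minimizer contained in a ball $B_{R(m)}$ with $R\in L^\infty_{loc}(\mathbb{R}^+)$, rather than merely existence in a weaker sense. I would handle this by quoting Baer's existence theorem directly and supplementing with the Steiner symmetrization reduction in the $x_n$-direction to bound the support: a minimizer can be symmetrized in all horizontal hyperplanes without increasing $\mathcal{E}$ since the potential depends only on $x_n$, producing a representative whose horizontal sections are centered balls whose radii are controlled by the isoperimetric profile, and whose vertical extent is controlled by the linear growth of $\phi$ versus the perimeter term. Once this compactness/boundedness statement is in hand, the conclusion of Corollary \ref{q,} is a direct application of Corollary \ref{@'w09}.
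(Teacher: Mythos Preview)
Your proposal is correct and matches the paper's (implicit) argument: the paper states Corollary~\ref{q,} without proof, treating it as an immediate specialization of Corollary~\ref{@'w09} with $\phi(t)=\alpha t$ and with hypothesis~(ii) supplied by Baer's existence result for admissible $f$ in \cite{R}. Your verification of the two hypotheses is exactly what is intended, and the supplementary discussion of Steiner symmetrization and diameter bounds is consistent with the paper's remarks but more than the paper itself spells out.
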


\begin{rem}
Assuming that $f$ is smooth, Baer proved uniqueness \cite[Theorem 3.12.]{R}. Corollary \ref{q,} together with \cite[Theorem 3.10.]{R} yields a unique convex minimizer when $m$ is small under the assumptions in Corollary \ref{q,} (in particular, without additional regularity). 
\end{rem}

\subsection{Geometry of $E_m$}

Next, the initially stated problem is revisited to address results for $m \in (0,\infty)$.\\

{\bf Problem:} If the potential $g$ is convex (or, more generally, if the sub-level sets $\{g < t\}$
are convex), are minimizers convex or, at least, connected?\\

\begin{thm} \label{mi}
If $n=2$, the sub-level sets $\{g < t\}$ are convex, $g$ is locally Lipschitz, and $g$ admits minimizers $E_m \subset B_{R(m)}$, then 
$$E_{m}=\cup_{i=1}^N A_{i},$$
where $A_{i}$ are convex and have disjoint closures, $N<\infty$.
\end{thm}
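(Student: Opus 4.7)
The plan splits into two stages. Stage 1 shows that $E_m$ has finitely many connected components with pairwise disjoint closures, relying only on $g \in L^\infty_{\mathrm{loc}}$ and the regularity theory for almost-minimizers of perimeter, not on convexity of sub-level sets. Stage 2 shows that each component is convex, and is where the convexity of $\{g<t\}$ is essential.

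For Stage 1, I would decompose $E_m = \bigsqcup_\alpha A_\alpha$ into connected components. In $n=2$, the anisotropic isoperimetric inequality gives $\mathcal{F}(A_\alpha) \ge c_0 |A_\alpha|^{1/2}$, and local Lipschitz continuity of $g$ together with $E_m \subset B_R$ yields $M := \|g\|_{L^\infty(B_R \cap \{g<\infty\})} < \infty$. If $|A_\alpha|=v$ is below a threshold $\delta_0$, I would delete $A_\alpha$ and reattach its mass to a fixed larger component $A_\beta$ via a translated Wulff-shaped piece touching $A_\beta$; this drops $\mathcal{F}$ by at least $c_0 v^{1/2}/2$ while perturbing $\mathcal{G}$ by at most $2Mv$, a net decrease for $v$ small, contradicting minimality. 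Hence $|A_\alpha| \ge \delta_0 > 0$ uniformly and $N \le m/\delta_0 < \infty$. The disjointness of closures follows from $C^{1,\alpha}$ regularity of $\partial^* E_m$ (standard almost-minimizer theory, valid since $g \in L^\infty_{\mathrm{loc}}$), since a tangential touching of two smooth components would be a removable singularity — a local smoothing would strictly decrease $\mathcal{F}$ at lower-order perturbation of $\mathcal{G}$.

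For Stage 2, I would fix a component $A$ and assume toward contradiction that it is not convex. Let $\tilde A := \mathrm{conv}(A)$, so $|\tilde A \setminus A| > 0$, and the convexity and positive $1$-homogeneity of $f$ give $\mathcal{F}(\tilde A) \le \mathcal{F}(A)$ with a strict gap $\delta > 0$. I would build a competitor $A^* := s\tilde A + y_0$ with $s := (|A|/|\tilde A|)^{1/2} \in (0,1)$, so $|A^*| = |A|$ and $\mathcal{F}(A^*) = s\mathcal{F}(\tilde A) \le \mathcal{F}(A) - \delta'$ for some $\delta' > 0$. I would choose $y_0$ so that $A^* \subset \tilde A$ and $A^*$ stays disjoint from the other components, feasible by the positive separation from Stage 1 combined with $s<1$. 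The convexity of sub-level sets enters through the bathtub principle: among subsets of $\tilde A$ of volume $|A|$, the minimum of $\mathcal{G}$ is achieved by the convex set $\tilde A \cap \{g<t^*\}$, and equals at most $\mathcal{G}(A)$. I would position $A^*$ near this bathtub minimizer and use the Lipschitz bound on $g$ to conclude $\mathcal{G}(A^*) \le \mathcal{G}(A) + \delta'/2$, so that the modified set $E_m' := (E_m \setminus A) \cup A^*$ satisfies $\mathcal{E}(E_m') < \mathcal{E}(E_m)$, the desired contradiction.

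The main obstacle is this final potential-energy comparison: one must bridge between the scaled Wulff-type translate $A^*$ (chosen to control $\mathcal{F}$) and the bathtub minimizer $\tilde A \cap \{g<t^*\}$ (chosen to control $\mathcal{G}$). The scaled translate need not sit near the bathtub set, and its $\mathcal{G}$ value could a priori exceed $\mathcal{G}(A)$ by an amount comparable to $M|A|$, overwhelming the perimeter gain. I expect the resolution to take the form of either a convex interpolation between the two candidate shapes, or an iterative refinement alternating a scaling step and a repositioning step, exploiting the Lipschitz bound on $g$ and the co-area formula to bound $|\mathcal{G}(A^*) - \mathcal{G}(\tilde A \cap \{g<t^*\})|$ in terms of $|A^* \Delta (\tilde A \cap \{g<t^*\})|$, which can be made small by a suitable choice of $y_0$ once $s$ is close to $1$ (i.e., once $|\tilde A \setminus A|$ is small). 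For a strongly non-convex $A$, where $|\tilde A \setminus A|$ is not small, the argument likely requires an induction on the size of the concavity, handled by first filling in small concavities via this perturbative step and then iterating.
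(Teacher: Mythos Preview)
Your two stages invert the roles relative to the paper's proof, and this inversion is where the difficulty enters. In the paper, convexity of the connected components is the \emph{first} step and is essentially a one-liner: the first variation of $\mathcal{E}$ forces the anisotropic mean curvature of $\partial E_m$ to be non-negative, and in $\mathbb{R}^2$ this suffices for each indecomposable component to be convex (this is Proposition~1 in Figalli--Maggi, cited directly). No sub-level convexity is invoked at this stage. The hypothesis that $\{g<t\}$ is convex is used instead in the \emph{disjoint-closure} step: if two convex components $A_j,A_l$ touch at a single point, the paper passes to $E=E_m\cup\mathrm{conv}(A_j\cup A_l)$, which has $\mathcal{F}(E)\le\mathcal{F}(E_m)$ and $|E|>m$, and then trims back to mass $m$ via $[E]_m:=E\cap\{g\le\lambda\}$. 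It is precisely the convexity of $\{g\le\lambda\}$ that gives $\mathcal{F}([E]_m)\le\mathcal{F}(E)$ and $\mathcal{G}([E]_m)\le\mathcal{G}(E_m)$ (McCann's Lemmas~2.6 and~3.4), closing the contradiction. Your claim that Stage~1 needs no sub-level convexity while Stage~2 does is thus backwards.

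Concretely, your Stage~2 carries a genuine gap that you yourself flag: there is no mechanism to reconcile the scaled translate $A^*=s\tilde A+y_0$ (built to control $\mathcal{F}$) with the bathtub set $\tilde A\cap\{g<t^*\}$ (built to control $\mathcal{G}$). The perimeter gain $\delta'$ is fixed, but the potential error can be of order $M|A|$; your ``iterative refinement'' and ``induction on the size of the concavity'' are not arguments, and for a component with a large concavity there is no smallness parameter to exploit. The paper avoids all of this by getting convexity of components from the Euler--Lagrange inequality directly. Separately, your Stage~1 disjoint-closure argument relies on $C^{1,\alpha}$ regularity of $\partial^*E_m$, which requires a uniformly elliptic integrand; the theorem is stated for arbitrary convex positively $1$-homogeneous $f$ (crystalline included), where the boundary need not be $C^1$ and a ``local smoothing'' competitor is not available. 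The paper handles this with density estimates plus the sub-level trimming argument, not with boundary regularity.
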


\begin{proof}

The first variation formula for $\mathcal{F}$ implies that the anisotropic mean
curvature of $\partial E_{m}$ is non-negative and in two dimensions this is sufficient for the convexity of 
every connected component of $E_{m}$ (via e.g. Proposition 1 in Figalli and Maggi \cite{MR2807136}).

Therefore, 
$$E_{m}=\cup_{i=1}^\infty A_{i},$$
where $A_{i}$ are disjoint, convex. Theorem 4.4 in Giusti \cite{Giust} yields -- up to sets of measure zero --

$$
\partial E_m = \overline{\partial^* E_m},
$$
$\&$ density estimates imply

$$
\mathcal{H}^1(\partial E_m \setminus \partial^* E_m)=0;
$$

hence, 

$$\mathcal{H}^1(\partial \overline{E_m} \setminus \partial \text{int} E_m)=0.$$
\\
This implies that $\text{int} E_m$ is also a minimizer. If $A_j$ and $A_l$ ($j \neq l$) have non-disjoint closures, let $x \in \partial^* E_m\cap \overline{A_j} \cap \overline{A_l} $ and note that thanks to the regularity $x$ has density $1$, but since 
$x \notin \text{int} E_m$, the density estimate (e.g. Proposition A.6 in \cite{MR1641031} applied with $\text{int} E_m$) implies 
$$
|B_r(x) \setminus \text{int} E_m| \ge a r^2>0
$$
when $r>0$ is small, a contradiction. Suppose $x \in (\partial E_m \setminus \partial^* E_m)\cap \overline{A_j} \cap \overline{A_l}$; thanks to 
$
\mathcal{H}^1(\partial E_m \setminus \partial^* E_m)=0
$
($\&$ convexity of $A_{i}$)
$\partial A_j \cap \partial A_l=\{x\}.$ Let $E=E_m \cup \text{conv}(A_j \cup A _l)$: firstly observe that $|E|>m$, and since the closure of $A_j \cup A _l$ is connected, 
$$
\mathcal{F}(\text{conv}(A_j \cup A _l)) \le \mathcal{F}(A_j \cup A _l)
$$
 (via e.g. Corollary 2.8 in \cite{MR1641031}); in addition, the surface energy satisfies the inclusion-exclusion estimate
 $$
 \mathcal{F}(A \cup B)+\mathcal{F}(A \cap B) \le \mathcal{F}(A)+\mathcal{F}(B)
 $$
 
 \begin{figure}[htbp] 
\centering
\label{nwing}
\includegraphics[width=.6 \textwidth]{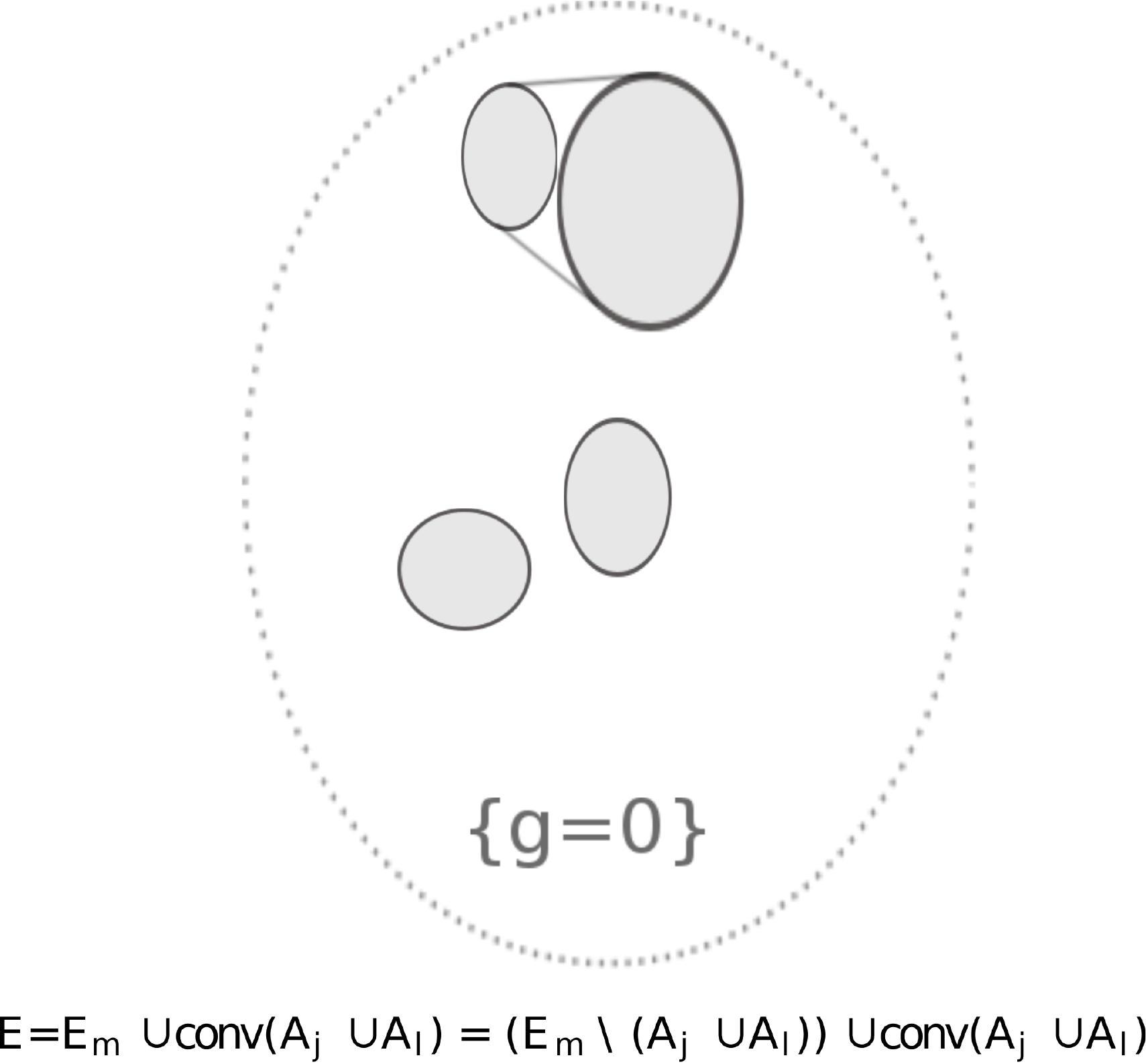}
\caption{(a)}
\end{figure}

 (e.g. via (32) in \cite{MR1641031});
 this implies
\begin{align*}
\mathcal{F}(E) &= \mathcal{F}((E_m \setminus (A_j \cup A _l)) \cup \text{conv}(A_j \cup A _l))\\
&\le \mathcal{F}(E_m \setminus (A_j \cup A _l))+\mathcal{F}(\text{conv}(A_j \cup A _l))\\
& \le \mathcal{F}(E_m \setminus (A_j \cup A _l))+\mathcal{F}(A_j \cup A _l)= \mathcal{F}(E_m)
\end{align*}
which implies
\begin{equation} \label{E}
\mathcal{F}(E) \le \mathcal{F}(E_m).
\end{equation}

If $[E]_m=\{g\le \lambda\} \cap E$, where $\lambda$ satisfies $|[E]_m|=m$ if $E$ is not in $\{g=0\}$, 
$$
\mathcal{F}([E]_m) \le \mathcal{F}(E), 
$$

$$
\mathcal{G}([E]_m) \le \mathcal{G}(E_m)
$$
\\
with strict inequality unless (a) $\mathcal{G}(E_m)=0$; (b) $E_m=[E]_m$
(via e.g. Lemma 2.6  $\&$ Lemma 3.4 in \cite{MR1641031}).
This yields 

$$
\mathcal{E}([E]_m) \le \mathcal{E}(E_m) \le \mathcal{E}([E]_m).
$$
\\
\noindent In particular, one of (a) or (b) must be true: if (a) is true $E_m \subset \{g=0\}$, 
$\mathcal{E}(E_m)=\mathcal{F}(E_m)$, therefore contracting with $a<1$, $|aE|=m$, $E\subset \{g =0\}$ via the convexity of  
$\{g =0\}$ (Figure 3), hence 

$$\mathcal{E}(aE)=a\mathcal{E}(E)<\mathcal{E}(E) \le \mathcal{E}(E_m)$$ 
\\
\noindent because \eqref{E} is true, which contradicts that $E_m$ is a minimizer; suppose (b) is true, 
$$E_m=[E]_m=\{g\le \lambda\} \cap E= \{g\le \lambda\} \cap (E_m \cup (\text{conv}(A_j \cup A _l)))\\$$
\\
\noindent and this implies $E_m \subset \{g\le \lambda\}$; thus, since $\{g\le \lambda\}$ is convex, $E \subset \{g\le \lambda\}$ which implies $E=E_m$ contradicting $|E|>m$.

Hence, $\{A_{i}\}$ have disjoint closures. Now, there exists $w(m)>0$ such that $\inf_i |A_{i}|\ge w(m) > 0$: if $|A_{i}|\rightarrow 0$, let $E_1= A_{1}$, $E_2=A_{i}$,

$$
|hE_1|=|E_1|+|E_2|
$$ 
\\
$|E_2|=(h^2-1)|E_1|=\gamma r^2,$ $r=\sqrt{h^2-1}$, $\gamma>0$. Thus, $\mathcal{F}(E_2) \ge c\sqrt{h^2-1}$, and if $h>1$ is sufficiently near $1$, 

\begin{align*}
&\mathcal{F}(hE_1)+\int_{hE_1} g(x)dx \\ 
&\le \mathcal{F}(E_1) +c_1(h-1)+\int_{E_1} g(x)dx + c_2|hE_1 \setminus E_1|+\mathcal{F}(E_2)+\int_{E_2} g(x)dx\\
&\hskip .3in     -\mathcal{F}(E_2)-\int_{E_2} g(x)dx\\
&\le \mathcal{F}(E_1)+\mathcal{F}(E_2)+\int_{E_1} g(x)dx+\int_{E_2} g(x)dx+\hat c(h-1)-c\sqrt{h^2-1}\\
&<\mathcal{F}(E_1)+\mathcal{F}(E_2)+\int_{E_1} g(x)dx+\int_{E_2} g(x)dx.
\end{align*}
Hence, assuming there exists $0<h-1$ sufficiently small with 

\begin{equation} \label{e}
h E_1 \cap (E_m \setminus E_1) = \emptyset,
\end{equation}
\\
the inequality yields
$$
 \mathcal{E}(hE_1)<\mathcal{E}(E_1 \cup E_2)
$$
and 

$$
|hE_1 \cup (E_m \setminus (E_1 \cup E_2))|=m,
$$
\\

\begin{figure}[htbp]
\includegraphics[width=.8 \textwidth]{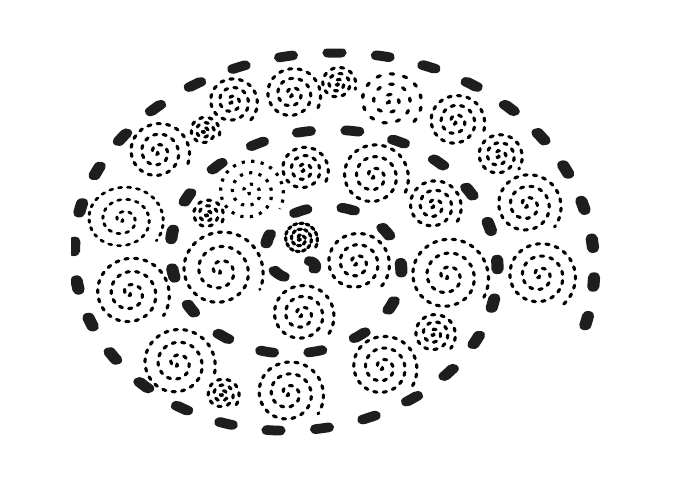}
\caption{A pathological scenario}
\label{dra}
\end{figure}

\noindent a contradiction which finishes the proof. Supposing a pathological scenario in which \eqref{e} is not true: for any two sets in $\{A_i\}$ the h-dilation of one of the sets, say $A_e(=:E_1)$, contains sets $A_k^h \in \{A_i\} \setminus \{A_e, E_2\}$ ($E_2$ is the second set), see Figure \ref{dra}, 
$$
h E_1 \cap (E_m \setminus E_1) \neq \emptyset
$$
\\
for $h\rightarrow 1^+$, 
$$
|h E_1 \cap (E_m \setminus E_1)| \rightarrow 0,
$$

\noindent therefore one may argue as in Proposition A.9 in \cite{MR1641031} to obtain a contradiction.
\end{proof}

\begin{rem}
If $g \in C^{1,\alpha}$ is convex coercive, $f \in C^{3,\alpha}(\mathbb{R}^2\setminus \{0\})$ is uniformly elliptic, connectedness of equilibrium shapes was obtained by De Philippis and Goldman for $m \in (0,\infty)$ \cite{D}. Therefore, their theorem combined with my theorem yields convexity in $\mathbb{R}^2$ subject to the regularity assumptions and without McCann's theorem. Another proof of convexity when $f$ is euclidian is via Theorem 1 in Ferriero and Fusco \cite{MR2461811} $\&$ Proposition 1 in Figalli and Maggi \cite{MR2807136}: 
$$
\mathcal{H}^1(\partial \text{co}(E_m))\le \mathcal{H}^1(\partial E_m) \le \mathcal{H}^1(\partial \text{co}(E_m))
$$
yields
$\text{co}(E_m)=E_m$ \cite{MR2461811}. If $f$ is elliptic, the same argument above and Corollary 2.8 in McCann \cite{MR1641031} (see the proof) replacing Theorem 1 in Ferriero and Fusco \cite{MR2461811} implies convexity.
\end{rem}

\begin{cor}
If $A \subset \mathbb{R}^2$ is bounded, convex and $$\mathcal{F}(E_m)=\inf\{\mathcal{F}(E): E\subset A, |E|=m\},$$ $|K_a|<m\le |A|$, $|K_a|$ is the measure of the largest Wulff shape in $A$,  then $E_m$ is convex. 
\end{cor}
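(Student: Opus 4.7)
The plan is to recast the constrained minimization as an unconstrained one by introducing the indicator potential
\[
g(x) = \begin{cases} 0, & x \in A, \\ +\infty, & x \notin A, \end{cases}
\]
and then invoke Theorem~\ref{n=2}. With this choice, $\mathcal{G}(E) = 0$ for $E \subset A$ and $\mathcal{G}(E) = +\infty$ otherwise, so the minimizers of $\mathcal{E} = \mathcal{F} + \mathcal{G}$ under the mass constraint coincide with the sets $E_m$ appearing in the corollary's hypothesis.

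I would then verify each hypothesis of Theorem~\ref{n=2} in turn: (i) $g$ is identically zero on $\{g<\infty\} = A$, hence locally Lipschitz there; (ii) minimizers exist and are contained in $A \subset B_R$ for any fixed $R$ with $A \subset B_R$, so that $R(m) \equiv R \in L^\infty_{loc}(\mathbb{R}^+)$, existence itself being guaranteed by the corollary's hypothesis (or by standard compactness since $A$ is bounded); (iii) the sub-level sets $\{g < t\}$ are either $\emptyset$ or $A$, both convex; (iv) for every $E \subset \{g<\infty\} = A$, $E \cap \{g \neq 0\} \subset A \cap A^c = \emptyset$, so the antecedent of (iv) is never satisfied and the implication holds vacuously.

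Theorem~\ref{n=2} then yields the convexity of $E_m$ for every $m \in (0, |A|)$, in particular for $|K_a| < m < |A|$. The boundary case $m = |A|$ is immediate: any $E \subset A$ with $|E| = |A|$ satisfies $|A \setminus E| = 0$, so $E = A$ modulo null sets, and $A$ is convex by hypothesis.

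The only point requiring care is the vacuous fulfillment of (iv), whose role in Theorem~\ref{n=2} is to rule out energy decrease under translation. Its vacuity here merely reflects that the potential is translation invariant on $\{g<\infty\}$, and the remaining structural steps of the argument (the finite decomposition from Theorem~\ref{mi} together with the subsequent reduction to a single convex component) rely only on (i)--(iii), all of which are verified. Hence no real obstacle arises.
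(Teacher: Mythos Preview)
Your approach is correct and is essentially the same as the paper's: both introduce the indicator potential of $A$ and reduce to the structural machinery developed for general $g$. The only difference is packaging. The paper appeals directly to the proof of Theorem~\ref{mi}, noting that the decomposition $E_m=\cup_i A_i$ holds and that the argument used there to exclude case~(a) (the $\{g=0\}$ case) forces $N=1$. You instead invoke Theorem~\ref{n=2} as a black box with hypothesis~(iv) satisfied vacuously; but the part of Theorem~\ref{n=2}'s proof that actually fires when every component lies in $\{g=0\}$ is precisely that same case-(a) exclusion argument. So the two routes coincide at the level of the underlying mechanism. Your final paragraph is unnecessary: once (iv) is verified (even vacuously), Theorem~\ref{n=2} applies without further inspection of its proof, so there is nothing to worry about. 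One minor organizational point: Theorem~\ref{n=2} is proved after this corollary in the paper, so citing it here is a forward reference---harmless, since its proof does not use the corollary, but worth noting.
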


\begin{proof}
Let 
\begin{equation*}  
g(x)=\begin{cases}
 0 & \text{if } x\in \bar A\\
\infty  & \text{if }x \notin \bar A.
\end{cases}
\end{equation*}
If $m \le |A|$, note that a compactness argument implies the existence of minimizers $E_m$. An analog of the proof of Theorem \ref{mi} implies $E_{m}=\cup_{i=1}^N A_{i} \subset \{g=0\}$, $A_i$ convex, disjoint. In particular, the argument to exclude (a) in the proof of the theorem implies $N=1$. 
\end{proof}

\begin{rem}
The problem in the case of the isotropic perimeter was the main objective in \cite{MR1669207} $\&$ has appeared in the articles: \cite[Theorem 11]{MR2178065}, \cite[(1.8)]{MR2436794}, and \cite[Remark 2.6.]{MR2468216}.
\end{rem}

\begin{cor} \label{mil}
Assume $n=2$, the sub-level sets $\{g < t\}$ are convex, $g$ is locally Lipschitz, and  $g$ is coercive, then 
$$E_{m}=\cup_{i=1}^N A_{i},$$
where $A_{i}$ are convex and have disjoint closures, $N<\infty$.
\end{cor}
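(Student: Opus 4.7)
The plan is to reduce Corollary \ref{mil} to Theorem \ref{mi}. The only hypothesis of Theorem \ref{mi} not already granted by the corollary is the existence of minimizers $E_m \subset B_{R(m)}$ with $R\in L^\infty_{loc}(\mathbb{R}^+)$, so the entire task is to upgrade coercivity of $g$ into this existence-with-bound statement. Once that is done, Theorem \ref{mi} immediately delivers the finite convex decomposition $E_m=\cup_{i=1}^N A_i$ with disjoint closures.

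First I would establish existence by the direct method. Because $g(0)=0$ and $g$ is locally Lipschitz, placing a Wulff shape $K_m$ of mass $m$ near the origin gives the a priori bound $\inf \mathcal{E} \le \mathcal{E}(K_m)=:C_0(m)<\infty$. For a minimizing sequence $\{E_k\}$ with $|E_k|=m$, both $\mathcal{F}(E_k)$ and $\mathcal{G}(E_k)$ stay bounded by $2C_0(m)$ for large $k$. Coercivity together with Chebyshev provide tightness,
$$
|E_k \cap \{|x|>R\}| \le \frac{2 C_0(m)}{\inf_{|x|>R} g} \longrightarrow 0 \quad \text{as } R \to \infty,
$$
so combined with the uniform perimeter bound, BV-compactness yields a subsequence $\chi_{E_k}\to \chi_E$ in $L^1_{loc}$ with $|E|=m$; lower semicontinuity of $\mathcal{F}$ and Fatou's lemma applied to $\mathcal{G}$ then make $E=:E_m$ a minimizer.

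Second, I would prove the containment $E_m\subset B_{R(m)}$ with $R\in L^\infty_{loc}(\mathbb{R}^+)$. Tightness already shows that all but arbitrarily little mass of $E_m$ sits in a fixed ball. To exclude residual mass escaping to infinity, I would argue by contradiction via a tail-excision competitor: fix $R_0$ so large that $|E_m\cap B_{R_0}|\ge m/2$; for $R\gg R_0$, remove $E_m\setminus B_R$ and insert the same amount of mass in a thin collar adjacent to $E_m\cap B_{R_0}$. The relative isoperimetric inequality bounds the added perimeter in terms of $|E_m\setminus B_R|^{1/2}$, whereas coercivity makes the potential saving at least $(\inf_{|x|>R}g - \sup_{B_{R_0}} g)\,|E_m\setminus B_R|$; for $R$ sufficiently large, the latter dominates, contradicting minimality. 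Since $C_0(m)$ is locally bounded in $m$, so is $R(m)$.

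The main obstacle is the quantitative execution of the tail-excision step, especially arranging the inserted collar so that its perimeter cost is controlled by $|E_m\setminus B_R|$ (which is small by tightness) rather than by $\mathcal{F}(E_m)$ (which is not). This is exactly the type of argument used in the bounded-minimizer proofs of McCann \cite{MR1641031} and Figalli--Maggi \cite{MR2807136}, where coercivity is the hypothesis that powers it. With a bounded minimizer in hand, every hypothesis of Theorem \ref{mi} is met, and the decomposition asserted in Corollary \ref{mil} follows at once.
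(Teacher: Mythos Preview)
Your approach is correct and is exactly the one the paper has in mind: the paper gives no separate proof of Corollary \ref{mil}, regarding it as immediate from Theorem \ref{mi} once one uses the standard fact (remarked after Theorem \ref{n=2}) that coercivity of $g$ yields bounded minimizers. Your sketch of the direct-method existence and tail-excision boundedness arguments simply fills in details the paper leaves implicit, and your citations to \cite{MR1641031, MR2807136} are apt.
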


\begin{rem}
In the case when $g$ is convex and coercive, McCann proved the result in the corollary \cite{MR1641031}. 
\end{rem}

\begin{thm} \label{q}
If $n=2$, the sub-level sets $\{g < t\}$ are convex, $g$ is locally Lipschitz, and $g$ admits minimizers $E_m \subset B_{R(m)}$, then
$\{m: E_m \hskip .03in is \hskip .03in convex \}$
is open. 
\end{thm}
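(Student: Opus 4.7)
The plan is to argue by contradiction. Suppose $m_0 \in \{m : E_m \text{ convex}\}$ yet there is a sequence $m_k \to m_0$ with $E_{m_k}$ non-convex. By Theorem~\ref{mi}, each $E_{m_k} = \bigcup_{i=1}^{N_k} A_i^k$ with the $A_i^k$ convex, of pairwise disjoint closures, and $N_k \ge 2$. The strategy is to extract an $L^1$-limit whose convexity forces two of the components to touch in the limit, and then to rerun the convex-hull-plus-sublevel-set truncation from the proof of Theorem~\ref{mi} to contradict the minimality of $E_{m_k}$.

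First I would establish the compactness limit. Since $R \in L^\infty_{\mathrm{loc}}(\mathbb{R}^+)$, the $E_{m_k}$ share a common container, so $BV$-compactness yields a subsequential $L^1$-limit $E$ with $|E| = m_0$; approximating any mass-$m_0$ competitor by small dilations of mass $m_k$, combined with lower semicontinuity of $\mathcal{F}$ and $L^1$-continuity of $\mathcal{G}$ from $g \in L^\infty_{\mathrm{loc}}(\{g<\infty\})$, shows $E$ is itself a minimizer and hence convex by hypothesis. The component-mass lower bound $\inf_i |A_i^k| \ge w(m_k)$ extracted from the proof of Theorem~\ref{mi}, together with local uniformity of that estimate near $m_0$, bounds $N_k \le m_k/c_0$; passing to a further subsequence one may assume $N_k \equiv N \ge 2$ and $A_i^k \to A_i^\infty$ in $L^1$ with $A_i^\infty$ convex, $|A_i^\infty| \ge c_0$, and pairwise null intersections.

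Now $E = \bigsqcup_i A_i^\infty$ is convex and connected while the $A_i^\infty$ are convex with disjoint interiors and positive mass, so after relabelling at least one pair satisfies $\overline{A_1^\infty} \cap \overline{A_2^\infty} \neq \emptyset$ and $\mathrm{dist}(A_1^k, A_2^k) \to 0$. Choose $(1,2)$ to minimize the limiting separation; uniform distance between the other $A_\cdot^\infty$ then ensures $C_k := \mathrm{conv}(A_1^k \cup A_2^k)$ is disjoint from $A_j^k$ for $j \ge 3$ and large $k$. Since $\overline{A_1^\infty \cup A_2^\infty}$ is connected, Corollary~2.8 of \cite{MR1641031} applied in the limit together with $L^1$-continuity of $\mathcal{F}$ on convex sets delivers $\mathcal{F}(C_k) \le \mathcal{F}(A_1^k) + \mathcal{F}(A_2^k) - \eta$ for a definite $\eta > 0$. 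Setting $\tilde E_k := (E_{m_k} \setminus (A_1^k \cup A_2^k)) \cup C_k$, one obtains $|\tilde E_k| > m_k$ and $\mathcal{F}(\tilde E_k) \le \mathcal{F}(E_{m_k}) - \eta$; truncating via $[\tilde E_k]_{m_k} := \{g \le \lambda_k\} \cap \tilde E_k$ (a valid mass-$m_k$ competitor by convexity of $\{g < t\}$) and invoking the (a)--(b) alternative from the proof of Theorem~\ref{mi} produces $\mathcal{E}([\tilde E_k]_{m_k}) < \mathcal{E}(E_{m_k})$ for large $k$, contradicting the minimality of $E_{m_k}$.

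The main obstacle is the quantitative step: producing a uniform surface-energy saving $\eta > 0$ in the pre-limit and arranging that $C_k$ does not collide with the other $A_j^k$. Both are anchored in the finite-$N$ reduction and the uniform component-mass lower bound, but the bookkeeping---balancing the convex-hull saving against the excess mass $|C_k| - |A_1^k| - |A_2^k|$ and the adjustment cost of the truncation---requires direct transfer of the alternative analysis from the proof of Theorem~\ref{mi}, in the spirit of its treatment of the pathological scenario.
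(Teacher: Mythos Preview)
Your setup agrees with the paper's: contradiction, a sequence $m_k\to m_0$ with $E_{m_k}$ non-convex, the decomposition $E_{m_k}=\bigcup_{i=1}^{N}A_{k,i}$ from Theorem~\ref{mi}, a uniform lower bound $|A_{k,i}|\ge w>0$, passage to an $L^1$-limit $A=\bigcup_i A_i$ which is a mass-$m_0$ minimizer. After that the arguments diverge.

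The paper does \emph{not} return to the approximants. It observes that $\mathcal{E}(A)=\lim_k\mathcal{E}(E_{m_k})$ (squeezed between $\mathcal{E}(E_{m_0})$ and $\mathcal{E}(s_kE_{m_0})$), hence $\mathcal{F}(A)=\lim_k\mathcal{F}(E_{m_k})=\lim_k\sum_i\mathcal{F}(A_{k,i})$; lower semicontinuity on each piece gives $\sum_i\mathcal{F}(A_i)\le \mathcal{F}(A)$, while subadditivity gives the reverse, so $\mathcal{F}(A)=\sum_i\mathcal{F}(A_i)$. Since $N\ge2$ and each $|A_i|\ge w>0$, this equality is incompatible with $A$ being convex (any internal interface contributes strictly to $\sum_i\mathcal{F}(A_i)$ but not to $\mathcal{F}(A)$, because $f>0$ on the sphere). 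That is the whole contradiction---entirely at the limit level, with no convex-hull surgery on $E_{m_k}$ and no truncation.

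Your route, going back to $E_{m_k}$ and trying to manufacture a definite saving $\eta>0$ via $C_k=\mathrm{conv}(A_1^k\cup A_2^k)$, runs into the obstacle you yourself flag, and it is a real one. Once you know the limit $A$ is convex and tiled by the $A_i^\infty$, \emph{many} pairs of limit components touch; choosing $(1,2)$ to ``minimize limiting separation'' does not isolate a pair whose convex hull avoids the others. In a convex body cut into $N\ge3$ convex pieces (think of a triangle split by its medians), the convex hull of any two adjacent pieces typically swallows part of a third, so the claim that $C_k\cap A_j^k=\emptyset$ for $j\ge3$ fails in general. Without this, the inclusion--exclusion/truncation bookkeeping you borrow from Theorem~\ref{mi} does not go through. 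The paper's limit-level identity $\mathcal{F}(A)=\sum_i\mathcal{F}(A_i)$ sidesteps all of this; you should replace the second half of your argument with it.
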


\begin{proof}
Assume $m>0$ and $E_m$ is convex; if $m_k>m$, $E_{m_k}$ are not convex, and $m_k \rightarrow m$, let $R>0$ satisfy $E_{m_k} \subset B_R$,  
$$E_{m_k}=\cup_{i=1}^N A_{k,i},$$

$A_{k,i}$ convex and have disjoint closures (Theorem \ref{mi}); $A_{i}=\liminf_k A_{k,i}$ (in $L^1(B_R)$). 

There exists $w(m)>0$ such that $|A_{k,i}|\ge w(m) > 0$: if $|A_{k,i}|\rightarrow 0$, let $\inf_k |A_{k,l}| \ge w(N,m)>0$ ($N$ is bounded; therefore such a constant exists); then, set $E_1= A_{k,l}$, $E_2=A_{k,i}$,

$$
|hE_1|=|E_1|+|E_2|
$$ 
\\
$|E_2|=(h^2-1)|E_1|=\gamma r^2,$ $r=\sqrt{h^2-1}$, $\gamma>0$. Thus, $\mathcal{F}(E_2) \ge c\sqrt{h^2-1}$, and if $h>1$ is sufficiently near $1$, 

\begin{align*}
&\mathcal{F}(hE_1)+\int_{hE_1} g(x)dx \\ 
&\le \mathcal{F}(E_1) +c_1(h-1)+\int_{E_1} g(x)dx + c_2|hE_1 \setminus E_1|+\mathcal{F}(E_2)+\int_{E_2} g(x)dx\\
&\hskip .3in     -\mathcal{F}(E_2)-\int_{E_2} g(x)dx\\
&\le \mathcal{F}(E_1)+\mathcal{F}(E_2)+\int_{E_1} g(x)dx+\int_{E_2} g(x)dx+\hat c(h-1)-c\sqrt{h^2-1}\\
&<\mathcal{F}(E_1)+\mathcal{F}(E_2)+\int_{E_1} g(x)dx+\int_{E_2} g(x)dx.
\end{align*}
Hence, $$\sum_i |A_{k,i}|=m_k$$

$$
\sum_i\mathcal{F}(A_{k,i})=\mathcal{F}(E_{m_k}),
$$

and $A=\cup_i A_i$ has $|A|=m$;

therefore if $s_k>0$, $|s_kE_m|=|E_{m_k}|$, $s_k \rightarrow 1$, 

$$\mathcal{E}(E_m)\le \mathcal{E}(A)\le \liminf_k \mathcal{E}(E_{m_k}) \le \liminf_k \mathcal{E}(s_kE_m)=\mathcal{E}(E_m)$$
$$
\int_{A_{k_l,i}} g(x)dx \rightarrow \int_{A_i} g(x)dx;
$$
this implies that $A$ is a minimizer with mass $m$ \& 
\begin{align*}
|\mathcal{F}(A) - \sum_i\mathcal{F}(A_{i})|& \le |\mathcal{E}(A)-\mathcal{E}(E_{m_k})|+|\mathcal{G}(E_{m_k})-\mathcal{G}(A)|\\
&  + |\mathcal{F}(E_{m_k}) - \sum_i\mathcal{F}(A_{i})|\\
& \hskip .15in \rightarrow 0,
\end{align*}
which contradicts that minimizers with mass $m$ are convex (since $|A_{i}|\ge w(m) > 0, A=\cup_i A_i$). 
Hence there exists $e_a>0$ such that for all $\bar m\in (m, m+e_a)$, $E_{\bar m}$ is convex. A symmetric argument yields $e_a>0$ such that for all $\bar m\in (m-e_a, m)$, $E_{\bar m}$ is convex.
\end{proof}

\begin{cor} \label{qw}
If $n=2$, the sub-level sets $\{g < t\}$ are convex, $g$ is locally Lipschitz, and $g$ is coercive, then
$\{m: E_m \hskip .03in is \hskip .03in convex \}$
is open. 
\end{cor}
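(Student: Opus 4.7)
The plan is to reduce Corollary \ref{qw} to Theorem \ref{q} by verifying that coercivity of $g$, together with local Lipschitz regularity and convexity of sub-level sets, implies the existence of minimizers $E_m$ contained in balls $B_{R(m)}$ with $R \in L^\infty_{loc}(\mathbb{R}^+)$.

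For each $m > 0$, I would invoke the direct method on a minimizing sequence $\{E_k\}$ with $|E_k| = m$. Comparison against a translate of the Wulff shape of volume $m$ centered near a minimum of $g$ produces a uniform energy bound $\mathcal{E}(E_k) \le C(m)$, whence both $\mathcal{F}(E_k)$ and $\int_{E_k} g\, dx$ are uniformly bounded. Coercivity gives tightness, since $\inf_{|x| \ge R} g \to \infty$ forces $|E_k \cap \{|x|\ge R\}| \le C(m)/\inf_{|x|\ge R} g \to 0$ as $R \to \infty$. Standard $BV$ compactness then produces a subsequential $L^1$ limit $E_m$ with $|E_m| = m$, and lower semicontinuity of $\mathcal{F}$ together with Fatou's lemma applied to the nonnegative continuous $g$ yield $\mathcal{E}(E_m) \le \liminf_k \mathcal{E}(E_k)$, so $E_m$ is a minimizer.

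For the localization $E_m \subset B_{R(m)}$, I would combine the upper bound $\int_{E_m} g \le C(m)$ with density estimates for quasi-minimizers of the anisotropic perimeter, valid since $g \in L^\infty_{loc}$ makes $E_m$ a $\Lambda$-minimizer of $\mathcal{F}$ on each ball. Any point $x$ in the essential boundary of $E_m$ satisfies $|E_m \cap B_{r_0}(x)| \ge c r_0^2$ for some fixed small $r_0$, hence contributes at least $c r_0^2 \inf_{B_{r_0}(x)} g$ to the potential energy. Since coercivity forces this contribution to blow up as $|x| \to \infty$, the essential boundary must lie in some $B_{R(m)}$; the dependence $R(m)$ is locally bounded because the ingredients (the comparison energy $C(m)$, the quasi-minimality constant, and the coercivity threshold) vary locally boundedly in $m$.

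The main obstacle is the rigorous verification that density estimates hold with constants that are controlled uniformly as $m$ varies in a compact interval and with quasi-minimality parameter depending only on $\|g\|_{L^\infty(B_R)}$ for suitable $R$; this is classical in the isotropic setting and extends to the anisotropic case via the equivalence of the Wulff perimeter with the Euclidean one up to multiplicative constants depending on $f$. Once the hypotheses of Theorem \ref{q} are secured, the openness of $\{m : E_m \text{ is convex}\}$ follows at once by invoking that theorem.
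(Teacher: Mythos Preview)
Your approach is correct and matches the paper's: the corollary is stated in the paper without proof, the implicit argument being precisely the reduction to Theorem~\ref{q} via the observation (made explicitly in the introduction) that coercivity of $g$ guarantees the existence of minimizers $E_m \subset B_{R(m)}$ with $R \in L^\infty_{loc}(\mathbb{R}^+)$. You have simply supplied the standard direct-method and density-estimate details that the paper takes for granted.
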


An important application of Corollary \ref{qw} is the enlarging of the small mass convexity in Figalli and Maggi \cite{MR2807136}: \\

``Theorems 1 and 2 deal with the connectedness and convexity properties
of liquid drops and crystals in the small mass regime. Outside this special regime,
one expects convexity of minimizers, provided g is convex"...``this was proved in [8,12] when the mass is large enough. The
natural problem of how to fill the gap in between these two results is open. It seems
very likely that new ideas are needed to deal with this case"
p. 149. \\

In the following, the interval in Theorem 1 in Figalli and Maggi \cite{MR2807136} is extended with merely the assumption of convex sub-level sets and a local Lipschitz regularity assumption (if $g$ is convex, both of these are true). Also, a stability result is proved via uniqueness.

\begin{cor} \label{m+e}
If $n=2$ and $m_c$ is the critical mass in  Figalli and Maggi's Theorem 1 (for all $m \in (0,m_c]$, $E_m$ is convex, \cite{MR2807136}), then if $g$ is locally Lipschitz and the sub-level sets $\{g < t\}$ are convex there exists $e>0$ such that for $m \in (m_c, m_c+e)$, $E_m$ is convex. 
\end{cor}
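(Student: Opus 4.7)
The claim is essentially a one-line consequence of the openness theorem (Theorem \ref{q}) applied at the endpoint $m = m_c$, and the plan is to verify the hypotheses of that theorem and invoke it.

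First, by the definition of the critical mass $m_c$ from Figalli--Maggi's Theorem 1, the minimizer $E_{m_c}$ is convex; in particular $m_c \in \{m : E_m \text{ is convex}\}$. Second, the hypotheses of the present corollary supply exactly the two standing conditions required by Theorem \ref{q}: local Lipschitz regularity of $g$ and convexity of the sub-level sets $\{g < t\}$. The remaining hypothesis of Theorem \ref{q} --- that $g$ admits minimizers $E_m \subset B_{R(m)}$ --- is inherited from the ambient Figalli--Maggi framework (coercivity of $g$, together with the standard compactness/concentration argument, provides bounded minimizers for every mass in an interval around $m_c$).

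With these hypotheses in place, Theorem \ref{q} asserts that
\[
\{m : E_m \text{ is convex}\}
\]
is an open subset of $(0, \infty)$. Since $m_c$ lies in this set, there exists $e > 0$ such that
\[
(m_c - e,\, m_c + e) \subset \{m : E_m \text{ is convex}\},
\]
and in particular $E_m$ is convex for every $m \in (m_c, m_c + e)$, which is the conclusion.

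There is essentially no hard step: the corollary is a packaging statement that combines the small-mass convexity of Figalli--Maggi at the boundary point $m_c$ with the openness theorem proved above. The only point that needs a brief remark rather than a calculation is the existence of $E_m$ in a full neighborhood of $m_c$ so that Theorem \ref{q} applies --- this is why the hypothesis ``$g$ admits minimizers'' is inherited from the Figalli--Maggi setting rather than added anew. No further estimates, no new ingredients, and no new use of the sub-level convexity beyond what Theorem \ref{q} has already absorbed.
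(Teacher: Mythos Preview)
Your proposal is correct and follows exactly the approach the paper intends: the corollary is stated without proof immediately after Theorem \ref{q} and Corollary \ref{qw}, and the preceding text makes explicit that it is an ``application of Corollary \ref{qw}'' to enlarge the Figalli--Maggi interval. Your observation that $m_c$ lies in the (open) set $\{m:E_m\text{ is convex}\}$ and that the remaining hypotheses of Theorem \ref{q} are supplied by the Figalli--Maggi framework (coercivity, hence bounded minimizers) is precisely the intended argument.
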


\begin{cor} \label{m+e2}
If $n=2$, $m_c$ is the critical mass in  Figalli and Maggi's Theorem 1 (for all $m \in (0,m_c]$, $E_m$ is convex, \cite{MR2807136}), and if $g$ is convex there exists $e>0$ such that for $m \in (0, m_c+e)$, $E_m$ is unique $\&$ convex and for $\epsilon>0$ there exists $w_m(\epsilon)>0$ such that if 
$|E|=|E_m|$, $E \subset B_R$, and 
$$
|\mathcal{E}(E)-\mathcal{E}(E_m)|<w_m(\epsilon),
$$
then there exists $x \in \mathbb{R}^2$ such that 
$$
\frac{|(E_m+x) \Delta E|}{|E_m|}<\epsilon.
$$ 
\end{cor}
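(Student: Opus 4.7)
The plan is to assemble the corollary from three ingredients: the convexity extension of Corollary \ref{m+e}, a McCann-style uniqueness argument via displacement convexity, and a compactness--contradiction argument for the $L^1$-stability. Since $g$ is convex, it is automatically locally Lipschitz with convex sub-level sets, so the hypotheses of Corollary \ref{m+e} are available.

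First I would note that on $(0,m_c]$ the convexity of $E_m$ comes from Figalli--Maggi's Theorem 1, while on $(m_c, m_c+e)$ it is exactly the content of Corollary \ref{m+e}; hence on the whole interval $(0,m_c+e)$ every minimizer is convex. Next, to deduce uniqueness, I would invoke the displacement-convexity framework in McCann \cite{MR1641031}: given two convex minimizers $E^1_m, E^2_m$ of the same mass, the optimal transport interpolation $(E_t)_{t \in [0,1]}$ between the uniform measures on them yields convex sets of mass $m$ along which $\mathcal{F}$ is convex and $\mathcal{G}$ is convex --- the latter because $g$ itself is convex, not merely has convex sub-level sets --- with strict convexity of $\mathcal{G}$ unless the interpolation is a pure translation in a direction where $g$ is flat. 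The minimality of both endpoints then forces $E^1_m$ to coincide with a translate of $E^2_m$.

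For the $L^1$-stability I would fix $\epsilon>0$ and argue by contradiction: if no modulus $w_m(\epsilon)>0$ works, there exists a sequence $E_k \subset B_R$ with $|E_k|=m$, $|\mathcal{E}(E_k)-\mathcal{E}(E_m)| \rightarrow 0$, and
$$
\inf_{x \in \mathbb{R}^2} \frac{|(E_k+x)\Delta E_m|}{|E_m|} \ge \epsilon.
$$
Since $g$ is continuous and hence bounded on $B_R$, $\mathcal{G}(E_k)$ is uniformly bounded and so is $\mathcal{F}(E_k)$; by $BV$-compactness, up to a subsequence $E_k \rightarrow E_*$ in $L^1$ with $|E_*|=m$ and $E_* \subset \overline{B_R}$. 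Lower semicontinuity of $\mathcal{F}$ together with $L^1$-continuity of $\mathcal{G}$ (from $g \in L^\infty(B_R)$) give $\mathcal{E}(E_*) \le \liminf_k \mathcal{E}(E_k) = \mathcal{E}(E_m)$, so $E_*$ is a minimizer of mass $m$. By the uniqueness step, $E_* = E_m + x_*$ for some $x_* \in \mathbb{R}^2$, whence $|(E_k - x_*)\Delta E_m|/|E_m| \rightarrow 0$, contradicting the displayed inequality.

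The hardest part will be the uniqueness: passing from convexity of every minimizer to genuine uniqueness requires exploiting the convexity of $g$ itself (and not merely of its sub-level sets) through displacement convexity along the Wasserstein geodesic joining two putative minimizers, and then verifying that the flatness directions of $g$ are precisely the directions along which translation is an isometry of $\mathcal{E}$. Once this is in hand, the $L^1$-stability reduces to a routine compactness argument using containment in $B_R$ and lower semicontinuity of $\mathcal{E}$.
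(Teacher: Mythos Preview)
Your proposal is correct and follows essentially the same path the paper (implicitly) takes: convexity on $(0,m_c+e)$ via Corollary~\ref{m+e}, uniqueness among convex sets from McCann's theorem \cite{MR1641031} (this is exactly the ``each non-trivial component minimizes the free energy uniquely among convex sets'' quoted in the introduction), and then stability from the compactness argument that the paper records as Proposition~\ref{K}/Corollary~\ref{K*}. The only cosmetic difference is that the paper cites McCann's uniqueness as a black box rather than re-running the displacement-convexity argument, and it packages your final contradiction argument as the standalone Corollary~\ref{K*}; your write-up simply inlines both steps.
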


If the mass is small, the convexity of the sub-level is not necessary to obtain the uniqueness and convexity of minimizers. Therefore, if $g$ is merely in $L_{loc}^\infty$, by enlarging $m$ one expects non-convex minimizers or at least two convex minimizers. The critical mass when this occurs is identified via Theorem \ref{8p} $\&$ in higher dimension with additional regularity in Theorem \ref{85}. If the sub-levels of $g$ are convex $\&$ a uniqueness assumption in the class of convex sets holds, the critical mass is completely identified in $\mathbb{R}^2$ via Theorem \ref{8qp}.

\begin{thm} \label{85}
If $g \in C^1$, $f \in C^{2, \alpha}(\mathbb{R}^n\setminus\{0\})$, $\alpha \in (0,1)$ is $\lambda-$elliptic
and $g$ admits minimizers $E_m \subset B_{R(m)}$ with $R \in L_{loc}^\infty(\mathbb{R}^+)$, either:\\
(i) $E_m$ is convex $\&$ unique for all $m \in (0,\infty)$;\\
(ii) there exists $\mathcal{M}>0$ such that for all $m \in (0, \mathcal{M})$, $E_m$ is unique, convex and there exist $\epsilon_0, \gamma>0$ such that for all $\epsilon \le \epsilon_0$,
$$
\liminf_{m \rightarrow \mathcal{M}^-} \frac{\gamma(\mathcal{M}^{\frac{n-1}{n}}-m^{\frac{n-1}{n}})}{w_m(\epsilon)} \ge 1,
$$
where $w_m(\epsilon)>0$ satisfies Proposition \ref{K};\\
(iii) there exists $\mathcal{M}>0$ such that for all $m \in (0, \mathcal{M}]$, $E_m$ is unique, convex and for $m>\mathcal{M}$ there exists $a<m$ such that either convexity or uniqueness fails for mass $a$.
\end{thm}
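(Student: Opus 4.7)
The plan is to analyze the set $S = \{m>0 : E_m \text{ is convex and unique modulo translations and null sets}\}$ and build the trichotomy around how $S$ terminates. First I would verify that $S$ contains an initial interval $(0,m_0)$ by invoking Figalli and Maggi's Theorem 2 (convexity under the $\lambda$-ellipticity and $C^{2,\alpha}$-regularity of $f$) together with Corollary \ref{@'w} (small-mass uniqueness), which applies since $E_m \subset B_{R(m)}$ with $R\in L^{\infty}_{\text{loc}}$ supplies the $L^{\infty}_{\text{loc}}$ control on $g$ needed there. Define
$$\mathcal{M} := \sup\{M>0 : (0,M) \subset S\} \in (0,\infty].$$
If $\mathcal{M}=\infty$, alternative (i) holds by definition. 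Assume $\mathcal{M}<\infty$. If $\mathcal{M}\in S$, then $(0,\mathcal{M}]\subset S$, and for every $m>\mathcal{M}$ the supremum property forces some $a\in(\mathcal{M},m)$ at which convexity or uniqueness fails; this yields alternative (iii).

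The main content is the remaining subcase $\mathcal{M}\notin S$, where I must establish alternative (ii). I would argue by contradiction with the quantitative stability Proposition \ref{K}. Suppose the liminf inequality fails for some $\epsilon\le\epsilon_0$: there is a sequence $m_k\to\mathcal{M}^-$ with
$$\gamma\bigl(\mathcal{M}^{(n-1)/n}-m_k^{(n-1)/n}\bigr) < w_{m_k}(\epsilon).$$
The key competitor is the dilate $\widetilde{E}_k := (m_k/\mathcal{M})^{1/n}E_{\mathcal{M}}$, of mass $m_k$. Using $1$-homogeneity of $\mathcal{F}$, the $C^1$ regularity and local boundedness of $g$, and the uniform bound $E_m\subset B_{R(m)}$, the potential contribution between $\widetilde{E}_k$ and $E_{\mathcal{M}}$ is $O(\mathcal{M}-m_k)$ while the surface term scales by $(m_k/\mathcal{M})^{(n-1)/n}-1$. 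Combining this with the minimality of $E_{m_k}$ and of $E_{\mathcal{M}}$ (the latter tested against $(\mathcal{M}/m_k)^{1/n}E_{m_k}$) yields, for a suitable $\gamma = \gamma(\mathcal{F}(E_{\mathcal{M}}), \|g\|_{L^{\infty}(B_R)}, \mathcal{M})$,
$$|\mathcal{E}(\widetilde{E}_k)-\mathcal{E}(E_{m_k})| \le \gamma\bigl(\mathcal{M}^{(n-1)/n}-m_k^{(n-1)/n}\bigr).$$
Under the contradiction hypothesis this is strictly less than $w_{m_k}(\epsilon)$, so Proposition \ref{K} produces $x_k\in\mathbb{R}^n$ with $|(E_{m_k}+x_k)\Delta\widetilde{E}_k| < \epsilon\, m_k$.

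Since each $E_{m_k}$ is convex (as $m_k<\mathcal{M}$) and lies in a fixed ball $B_R$, a subsequence converges in $L^1$, up to translation, to a convex set $E^*$ of mass $\mathcal{M}$. Combined with $\widetilde{E}_k\to E_{\mathcal{M}}$ in $L^1$, the closeness bound forces $E^*$ to lie within relative $L^1$-distance $\epsilon$ of some translate of $E_{\mathcal{M}}$. Continuity of $\mathcal{G}$ on $B_R$ and lower semicontinuity of $\mathcal{F}$ make $E^*$ itself a minimizer at mass $\mathcal{M}$. Choosing $\epsilon_0$ smaller than the quantitative nonconvexity defect of $E_{\mathcal{M}}$ (when convexity fails at mass $\mathcal{M}$) or the translational $L^1$-gap between any two inequivalent minimizers (when uniqueness fails) contradicts the hypothesis $\mathcal{M}\notin S$, completing (ii).

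The main obstacle is isolating the correct constant $\gamma$ and ensuring the modulus $w_m(\epsilon)$ from Proposition \ref{K} remains usable uniformly as $m\uparrow\mathcal{M}$; this forces $\gamma$ to absorb the surface-energy scaling $\mathcal{F}(E_{\mathcal{M}})$, the $L^{\infty}$-norm of $g$ on $B_R$, and a factor from the expansion of $(m_k/\mathcal{M})^{(n-1)/n}-1$. A secondary subtlety is the transfer of convexity through $L^1$ limits; this is automatic since $L^1$-limits of uniformly bounded convex sets in a fixed ball are convex (equivalently, Hausdorff-convergent up to subsequence).
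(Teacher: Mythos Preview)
Your proposal follows essentially the same strategy as the paper: define the critical mass $\mathcal{M}$ as the supremum of the interval on which minimizers are unique and convex (your definition via $\sup\{M:(0,M)\subset S\}$ is in fact cleaner than the paper's $\sup\mathcal{A}$), invoke Figalli--Maggi's Theorem~2 together with the small-mass uniqueness result to get $\mathcal{M}>0$, and in the critical subcase compare $E_{m_k}$ with the rescaled $\gamma_k E_{\mathcal{M}}$, bound $|\mathcal{E}(\gamma_k E_{\mathcal{M}})-\mathcal{E}(E_{m_k})|$ by $\gamma(\mathcal{M}^{(n-1)/n}-m_k^{(n-1)/n})$, and feed this into Proposition~\ref{K} to reach a contradiction. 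This is precisely the paper's argument.

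Two small points. First, you cite Corollary~\ref{@'w}, which requires coercivity of $g$; since the theorem assumes only $g\in C^1$, you should invoke Theorem~\ref{@'} directly (the hypothesis $E_m\subset B_{R(m)}$ with $R\in L^\infty_{\mathrm{loc}}$ is exactly what it needs). Second, your final contradiction in the non-uniqueness subcase is slightly underspecified: if $E_{\mathcal{M}}$ is itself convex and $E^*$ happens to coincide with it up to translation, the closeness bound alone yields no contradiction. The paper handles this by first fixing the convex limit $T_{\mathcal{M}}$ of the $E_{m_k}$ and then choosing $E_{\mathcal{M}}$ to be a minimizer \emph{distinct} from $T_{\mathcal{M}}$, so that $\epsilon_0=\tfrac{1}{5}\inf_x|(E_{\mathcal{M}}+x)\Delta T_{\mathcal{M}}|/|E_{\mathcal{M}}|>0$ is well-defined before the liminf sequence is selected. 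Equivalently, one can run the energy estimate with \emph{both} inequivalent minimizers as competitors and use the triangle inequality; your phrase ``the translational $L^1$-gap between any two inequivalent minimizers'' points in this direction but does not quite close the loop.
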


\begin{proof}
Define 
$$
\mathcal{A}=\{m: E_m \hskip .08in \text{is unique $\&$ convex}\}
$$
$$
\mathcal{M}=\sup \mathcal{A}.
$$
Theorem \ref{@'} and Theorem 2 in Figalli and Maggi \cite{MR2807136} imply $(0,m_a) \subset \mathcal{A}$, hence $\mathcal{M} >0$. If $\mathcal{M}<\infty$, for $m \in (0,\mathcal{M})$, $E_m$ is unique $\&$ convex. 
Therefore either: (a) there exists a non-convex minimizer having mass $\mathcal{M}$; (b) there exist two convex minimizers having mass $\mathcal{M}$; or (c) for all $m \in (0, \mathcal{M}]$, $E_m$ is unique, convex and for $m>\mathcal{M}$ there exists $a<m$ such that either convexity or uniqueness fails for minimizers with mass $a$.
If $m_k<\mathcal{M}$, $m_k \rightarrow \mathcal{M}$, along a subsequence, $E_{m_k} \rightarrow T_\mathcal{M}$, with $|T_\mathcal{M}|=\mathcal{M}$, $T_\mathcal{M}$ a convex minimizer. Set 
$$
\epsilon=\frac{1}{5} \inf_{x} \frac{|(E_\mathcal{M}+x) \Delta T_\mathcal{M}|}{|E_\mathcal{M}|}>0,
$$
where if (a) $E_\mathcal{M}$ is the non-convex minimizer and if (b) $E_\mathcal{M}$ is a convex minimizer not (mod sets of measure zero and translations) equal to $T_\mathcal{M}$.
If $m \in (0,\mathcal{M})$, the uniqueness of convex minimizers  implies that there exists $w_m(\epsilon)>0$ such that for all $\epsilon>0$, if 
$|E|=|E_m|$, $E \subset B_R$, and 
$$
|\mathcal{E}(E)-\mathcal{E}(E_m)|<w_m(\epsilon),
$$
then there exists $x \in \mathbb{R}^n$ such that 
$$
\frac{|(E_m+x) \Delta E|}{|E_m|}<\epsilon
$$
via Proposition \ref{K}.
Let $\{m_k\}$ be the sequence such that 
$$
\liminf_{m \rightarrow \mathcal{M}^-} \frac{\mathcal{M}^{\frac{n-1}{n}}-m^{\frac{n-1}{n}}}{w_m(\epsilon)}=\lim_{k \rightarrow \infty} \frac{\mathcal{M}^{\frac{n-1}{n}}-m_k^{\frac{n-1}{n}}}{w_{m_k}(\epsilon)},
$$
and define $\gamma_k$ via $|\gamma_k E_\mathcal{M}|=|E_{m_k}|$ (i.e. $\gamma_k=(\frac{m_k}{\mathcal{M}})^{\frac{1}{n}}$); note  

\begin{align*}
|\mathcal{E}(\gamma_k E_\mathcal{M})-\mathcal{E}(E_{m_k})|& \le |\mathcal{E}(\gamma_k E_\mathcal{M})-\mathcal{E}(E_\mathcal{M})|+|\mathcal{E}(T_\mathcal{M})-\mathcal{E}(E_{m_k})|\\
&\le\mathcal{F}(E_\mathcal{M})(1-\gamma_k^{n-1})+ (\sup_{B_R}g) |E_\mathcal{M} \Delta (\gamma_k E_\mathcal{M})|\\
&+|\mathcal{E}(T_\mathcal{M})-\mathcal{E}(E_{m_k})|\\
\end{align*}

\begin{align*}
\mathcal{E}(T_{\mathcal{M}})& \le \mathcal{E}(\frac{1}{\gamma_k}E_{m_k})\\
&=\frac{1}{\gamma_k^{n-1}}\mathcal{F}(E_{m_k})+ \int_{\frac{1}{\gamma_k}E_{m_k}}g(x)dx\\
&\le (\frac{1}{\gamma_k^{n-1}}-1)\mathcal{F}(E_{m_k})+(\sup_{B_R}g) |\frac{1}{\gamma_k}E_{m_k} \Delta E_{m_k}|+\mathcal{E}(E_{m_k})
\end{align*}
and similarly thanks to $|\frac{1}{\gamma_k}E_{m_k} \Delta E_{m_k}| \le a(\frac{1}{\gamma_k}-1)$ (e.g via \cite[Lemma 4]{MR2807136}) this implies  

$$
|\mathcal{E}(T_{\mathcal{M}})-\mathcal{E}(E_{m_k})| \le \alpha_p (\frac{1}{\gamma_k^{n-1}}-1)=\alpha (\mathcal{M}^{\frac{n-1}{n}}-m_k^{\frac{n-1}{n}}),
$$

$m_k \thickapprox \mathcal{M}$.

In particular, 
$$
|\mathcal{E}(\gamma_k E_\mathcal{M})-\mathcal{E}(E_{m_k})| \le \gamma (\mathcal{M}^{\frac{n-1}{n}}-m_k^{\frac{n-1}{n}})
$$
where $\gamma=\gamma(\mathcal{M})$.

Suppose 

$$
\liminf_{m \rightarrow \mathcal{M}^-} \frac{\mathcal{M}^{\frac{n-1}{n}}-m^{\frac{n-1}{n}}}{w_m(\epsilon)}<\frac{1}{\gamma}, 
$$

then for $k$ large

$$
|\mathcal{E}(\gamma_k E_\mathcal{M})-\mathcal{E}(E_{m_k})| \le  \frac{ \gamma (\mathcal{M}^{\frac{n-1}{n}}-m_k^{\frac{n-1}{n}})}{w_{m_k}(\epsilon)} w_{m_k}(\epsilon)<w_{m_k}(\epsilon)
$$

and this implies the existence of $x_k$ such that 

$$
\frac{|(E_{m_k}+x_k) \Delta (\gamma_k E_\mathcal{M})|}{|E_{m_k}|}<\epsilon;
$$

however, if $k$ is large, $\gamma_k \thickapprox 1$, which implies 
\begin{align*}
\frac{|(E_{m_k}+x_k) \Delta (\gamma_k E_\mathcal{M})|}{|E_{m_k}|}& \thickapprox \frac{|(T_\mathcal{M}+x_k) \Delta E_\mathcal{M}|}{|E_\mathcal{M}|}\\
 &\ge \inf_x \frac{|(E_\mathcal{M}+x) \Delta T_\mathcal{M}|}{|E_\mathcal{M}|} =5\epsilon,
\end{align*}
a contradiction.
Therefore
$$
\liminf_{m \rightarrow \mathcal{M}^-} \frac{\mathcal{M}^{\frac{n-1}{n}}-m_k^{\frac{n-1}{n}}}{w_m(\epsilon)}\ge \frac{1}{\gamma}, 
$$

for $$\epsilon \le \epsilon_0:=\frac{1}{5}\inf_x \frac{|(E_\mathcal{M}+x) \Delta T_\mathcal{M}|}{|E_\mathcal{M}|}.$$

\end{proof}

\noindent {\bf Example:} \\
If $g=0$, $a(m,\epsilon)=w_m(\epsilon)=c(n)\epsilon^2 m^{\frac{n-1}{n}}$ via Figalli-Maggi-Pratelli \cite{MR2672283}:  this yields for all $\mathcal{M}, \epsilon, \gamma>0$,
$$
\liminf_{m \rightarrow \mathcal{M}^-} \frac{\gamma(\mathcal{M}^{\frac{n-1}{n}}-m^{\frac{n-1}{n}})}{w_m(\epsilon)} = \liminf_{m \rightarrow \mathcal{M}^-} \frac{\gamma(\mathcal{M}^{\frac{n-1}{n}}-m^{\frac{n-1}{n}})}{c(n)\epsilon^2 m^{\frac{n-1}{n}}}=0
$$
which precludes (ii).

\begin{cor}
If $g \in C^1$ is coercive, $f \in C^{2, \alpha}(\mathbb{R}^n\setminus\{0\})$, $\alpha \in (0,1)$ is $\lambda-$elliptic, either:\\
(i) $E_m$ is convex $\&$ unique for all $m \in (0,\infty)$;\\
(ii) there exists $\mathcal{M}>0$ such that for all $m \in (0, \mathcal{M})$, $E_m$ is unique, convex and there exist $\epsilon_0, \gamma>0$ such that for all $\epsilon \le \epsilon_0$,
$$
\liminf_{m \rightarrow \mathcal{M}^-} \frac{\gamma(\mathcal{M}^{\frac{n-1}{n}}-m^{\frac{n-1}{n}})}{w_m(\epsilon)} \ge 1,
$$
where $w_m(\epsilon)>0$ satisfies Proposition \ref{K};\\
(iii) there exists $\mathcal{M}>0$ such that for all $m \in (0, \mathcal{M}]$, $E_m$ is unique, convex and for $m>\mathcal{M}$ there exists $a<m$ such that either convexity or uniqueness fails for mass $a$.
\end{cor}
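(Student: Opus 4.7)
The strategy is to reduce this statement to Theorem \ref{85}, whose only hypothesis beyond the stated regularity and ellipticity is the existence of minimizers $E_m \subset B_{R(m)}$ with $R \in L_{loc}^\infty(\mathbb{R}^+)$. Thus the task splits into (a) producing minimizers for every $m \in (0,\infty)$, (b) showing each minimizer is essentially bounded, and (c) showing the radius is locally bounded in $m$; then Theorem \ref{85} directly yields the trichotomy (i)--(iii).

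For existence I would run the direct method on a minimizing sequence $\{E_k\}$ with $|E_k|=m$. Comparison with a translated Wulff shape of mass $m$ (translated to where $g$ is small) gives $\mathcal{E}(E_k)\le C(m)$. The perimeter bound $\mathcal{F}(E_k)\le C$ yields local $BV$-compactness, while coercivity prevents loss of mass at infinity: choosing $R$ so that $g\ge M$ on $\mathbb{R}^n\setminus B_R$ forces $|E_k\setminus B_R|\le C(m)/M$, which is arbitrarily small uniformly in $k$. Extracting an $L^1$ limit $E_m$ with $|E_m|=m$, lower semicontinuity of $\mathcal{F}$ and Fatou applied to $g\chi_{E_k}$ deliver $\mathcal{E}(E_m)\le \liminf_k \mathcal{E}(E_k)$, so $E_m$ is a minimizer.

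For the boundedness and local uniformity of the radius, given a minimizer $E_m$ and $R>0$, set $E_m^R=E_m\cap B_R$ and rescale by $s_R=(m/|E_m^R|)^{1/n}\to 1^+$ as $R\to\infty$. Comparing $\mathcal{E}(s_R E_m^R)$ with $\mathcal{E}(E_m)$, the surface energy changes by $(s_R^{n-1}-1)\mathcal{F}(E_m^R)$, controlled by the estimate $|s_R E_m^R\Delta E_m^R|\le a(s_R-1)$ (the analogue of the inequality used later in the proof of Theorem \ref{85}), while the potential energy drops by at least $(\inf_{|x|\ge R}g)\,|E_m\setminus B_R|$. Optimality forces $|E_m\setminus B_R|=0$ for $R$ large enough depending only on $\mathcal{E}(E_m)$. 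For $m$ in a compact $[a,b]\subset(0,\infty)$, the comparison with a Wulff shape of mass $b$ (placed where $g$ is small) furnishes a uniform upper bound on $\mathcal{E}(E_m)$, so the same $R$ works, giving $R\in L_{loc}^\infty(\mathbb{R}^+)$.

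The main obstacle is the truncation-and-rescale step: one must carefully balance the small gain in surface energy against the loss of potential energy from the tail $E_m\setminus B_R$, and ensure the argument survives uniformly on compact mass intervals. Once this is settled, the hypotheses of Theorem \ref{85} are satisfied and the conclusions (i), (ii), (iii) transfer verbatim.
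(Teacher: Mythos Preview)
Your proposal is correct and matches the paper's (implicit) approach: the corollary is stated in the paper immediately after Theorem \ref{85} with no separate proof, the only difference in hypotheses being that coercivity replaces the explicit assumption ``$g$ admits minimizers $E_m \subset B_{R(m)}$ with $R \in L_{loc}^\infty(\mathbb{R}^+)$''. The paper already remarks (after Theorem \ref{n=2}) that coercivity implies this existence/boundedness condition, and your direct-method plus truncation-and-rescale argument is exactly the standard way to justify that reduction.

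One small technical point worth tightening in your sketch: when you truncate $E_m^R = E_m \cap B_R$, the surface energy $\mathcal{F}(E_m^R)$ picks up an extra contribution from the slice $E_m \cap \partial B_R$, so it is not automatically $\le \mathcal{F}(E_m)$. The coarea formula gives $\int_{R_0}^{R_1} \mathcal{H}^{n-1}(E_m \cap \partial B_\rho)\,d\rho = |E_m \cap (B_{R_1}\setminus B_{R_0})|$, so for a good choice of $R$ this slice term is controlled by the tail mass $|E_m \setminus B_{R_0}|$, which is in turn controlled via coercivity; after that your balance of gains and losses goes through as written.
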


\begin{thm} \label{8p}
If $g \in L_{loc}^\infty$, $n=2$, and $g$ admits minimizers $E_m \subset B_{R(m)}$ with $R \in L_{loc}^\infty(\mathbb{R}^+)$, either:\\
(i) $E_m$ is convex $\&$ unique for all $m \in (0,\infty)$;\\
(ii) there exists $\mathcal{M}>0$ such that for all $m \in (0, \mathcal{M})$, $E_m$ is unique, convex and there exist $\epsilon_0, \gamma>0$ such that for all $\epsilon \le \epsilon_0$,
$$
\liminf_{m \rightarrow \mathcal{M}^-} \frac{\gamma(\mathcal{M}-m)}{w_m(\epsilon)} \ge 1,
$$
where $w_m(\epsilon)>0$ satisfies Proposition \ref{K};\\
(iii) there exists $\mathcal{M}>0$ such that for all $m \in (0, \mathcal{M}]$, $E_m$ is unique, convex and for $m>\mathcal{M}$ there exists $a<m$ such that either convexity or uniqueness fails for mass $a$.
\end{thm}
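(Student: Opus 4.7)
The plan is to mirror the argument of Theorem \ref{85} in two dimensions, replacing the higher-dimensional small-mass input (Theorem 2 of Figalli and Maggi, with its $C^1$ and $\lambda$-ellipticity hypotheses) by its regularity-free counterpart available for $n=2$: Theorem 1 of Figalli and Maggi \cite{MR2807136} together with Corollary \ref{@'w}. Define $\mathcal{A}=\{m>0:E_m \text{ is unique and convex}\}$ and $\mathcal{M}=\sup\{m>0:(0,m)\subset \mathcal{A}\}$. The remark following Corollary \ref{@'w} furnishes $m_0>0$ with $(0,m_0)\subset \mathcal{A}$, so $\mathcal{M}>0$ and $(0,\mathcal{M})\subset \mathcal{A}$ by construction. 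If $\mathcal{M}=\infty$ then (i) holds; if $\mathcal{M}<\infty$ and uniqueness and convexity persist at mass $\mathcal{M}$, then by definition of $\mathcal{M}$ there must exist a mass $>\mathcal{M}$ at which one of the two properties fails, giving (iii).

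The work is in the remaining case: $\mathcal{M}<\infty$ with either a non-convex minimizer or two distinct convex minimizers at mass $\mathcal{M}$. Pick $m_k\nearrow \mathcal{M}$; the $E_{m_k}$ are convex, contained in $B_R$, and have uniformly bounded perimeter, so a subsequence converges in $L^1$ to a convex set $T_\mathcal{M}$ of mass $\mathcal{M}$. Lower semicontinuity of $\mathcal{F}$ together with dominated convergence for $\mathcal{G}$ (using $g\in L^\infty(B_R)$) makes $T_\mathcal{M}$ a minimizer. Let $E_\mathcal{M}$ denote the competing non-convex or translation-inequivalent convex minimizer and set
$$\epsilon=\tfrac{1}{5}\inf_{x\in\mathbb{R}^2}\frac{|(E_\mathcal{M}+x)\Delta T_\mathcal{M}|}{|E_\mathcal{M}|}>0.$$
For each $m_k\in(0,\mathcal{M})$, the uniqueness of $E_{m_k}$ triggers Proposition \ref{K} and produces a positive modulus $w_{m_k}(\epsilon)$.

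Now run the rescaling of Theorem \ref{85}: set $\gamma_k=(m_k/\mathcal{M})^{1/2}$ so $|\gamma_k E_\mathcal{M}|=m_k$, split $|\mathcal{E}(\gamma_k E_\mathcal{M})-\mathcal{E}(E_{m_k})|$ through $T_\mathcal{M}$, bound the potential differences by $g\in L^\infty(B_R)$, and bound the surface differences via $|\gamma_k^{-1}E_{m_k}\Delta E_{m_k}|\le a(\gamma_k^{-1}-1)$ from \cite[Lemma 4]{MR2807136}. The resulting right-hand side is at most a constant multiple of $\mathcal{M}^{1/2}-m_k^{1/2}$; since $\mathcal{M}^{1/2}-m_k^{1/2}=(\mathcal{M}-m_k)/(\mathcal{M}^{1/2}+m_k^{1/2})$ and $m_k\approx \mathcal{M}$, this is bounded by $\gamma(\mathcal{M}-m_k)$ for some $\gamma=\gamma(\mathcal{M})>0$. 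If the $\liminf$ in (ii) were $<1$, then for $k$ large $|\mathcal{E}(\gamma_k E_\mathcal{M})-\mathcal{E}(E_{m_k})|<w_{m_k}(\epsilon)$, so Proposition \ref{K} supplies $x_k$ with $|(E_{m_k}+x_k)\Delta (\gamma_k E_\mathcal{M})|/|E_{m_k}|<\epsilon$; passing $k\to\infty$ with $\gamma_k\to 1$ and $E_{m_k}\to T_\mathcal{M}$ yields $\inf_x|(T_\mathcal{M}+x)\Delta E_\mathcal{M}|/|E_\mathcal{M}|\le \epsilon<5\epsilon$, contradicting the choice of $\epsilon$ and establishing (ii).

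The main obstacle is the availability of the stability modulus $w_{m_k}(\epsilon)$ of Proposition \ref{K} at each $m_k\in(0,\mathcal{M})$ under only $g\in L^\infty_{loc}$; this is precisely where Theorem \ref{85} invoked the higher regularity assumptions on $g$ and $f$, and in $n=2$ it is bypassed via the combination of Corollary \ref{@'w} with the two-dimensional small-mass convexity of Figalli and Maggi. A secondary subtlety is confirming that the convexity of $E_{m_k}$ survives the $L^1$ limit, which is standard once perimeters are uniformly bounded and the sets sit in $B_R$. All remaining rescaling and triangle-inequality steps are dimension-free and transfer verbatim from the proof of Theorem \ref{85}.
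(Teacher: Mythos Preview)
Your proposal is correct and follows essentially the same approach as the paper: the paper's proof simply invokes Theorem \ref{@'} together with Theorem 1 in Figalli--Maggi to secure the small-mass regime $(0,m_a)\subset\mathcal{A}$, and then transfers the argument of Theorem \ref{85} verbatim, which is exactly what you have spelled out in detail. One minor correction to your closing commentary: the higher regularity hypotheses in Theorem \ref{85} are used only for the small-mass convexity input (Figalli--Maggi's Theorem 2), not for the availability of $w_{m_k}(\epsilon)$---Proposition \ref{K} already applies under merely $g\in L^\infty_{loc}$ together with the uniqueness of $E_{m_k}$, which holds by construction for $m_k\in(0,\mathcal{M})$.
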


\begin{proof}
Define 
$$
\mathcal{A}=\{m: E_m \hskip .08in \text{is unique $\&$ convex}\}
$$
$$
\mathcal{M}=\sup \mathcal{A}.
$$
Theorem \ref{@'} and Theorem 1 in Figalli and Maggi \cite{MR2807136} imply $(0,m_a) \subset \mathcal{A}$, hence $\mathcal{M} >0$. Thus one may  argue -- verbatim -- as in the proof of Theorem \ref{85}.
\end{proof}

\begin{cor}
If $g$ is coercive, locally Lipschitz, $n=2$, either:\\
(i) $E_m$ is convex $\&$ unique for all $m \in (0,\infty)$;\\
(ii) there exists $\mathcal{M}>0$ such that for all $m \in (0, \mathcal{M})$, $E_m$ is unique, convex and there exist $\epsilon_0, \gamma>0$ such that for all $\epsilon \le \epsilon_0$,
$$
\liminf_{m \rightarrow \mathcal{M}^-} \frac{\gamma(\mathcal{M}-m)}{w_m(\epsilon)} \ge 1,
$$
where $w_m(\epsilon)>0$ satisfies Proposition \ref{K};\\
(iii) there exists $\mathcal{M}>0$ such that for all $m \in (0, \mathcal{M}]$, $E_m$ is unique, convex and for $m>\mathcal{M}$ there exists $a<m$ such that either convexity or uniqueness fails for mass $a$.
\end{cor}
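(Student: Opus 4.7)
The plan is to reduce this corollary directly to Theorem \ref{8p} by verifying its two hypotheses --- namely $g \in L^\infty_{loc}$ and the existence of minimizers $E_m \subset B_{R(m)}$ with $R \in L^\infty_{loc}(\mathbb{R}^+)$ --- from the present assumptions that $g$ is coercive and locally Lipschitz with $n=2$. The $L^\infty_{loc}$ bound is immediate: a locally Lipschitz function on $\mathbb{R}^2$ is bounded on every compact set.

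For existence with confinement, I would run the standard direct method. Fix $m>0$ and take a minimizing sequence $\{E_m^k\}$ with $|E_m^k|=m$. Comparing with a single scaled Wulff shape centered at the origin (whose free energy is finite because $g$ is locally bounded) produces a uniform bound on $\mathcal{E}(E_m^k)$, hence uniform bounds on both $\mathcal{F}(E_m^k)$ and $\int_{E_m^k} g$. Coercivity of $g$ then forces mass to stay within a fixed radius $R(m)$: otherwise a positive fraction of the mass would sit where $g$ is large, contradicting the bound on $\mathcal{G}(E_m^k)$. With tightness in hand, BV compactness and lower semicontinuity of $\mathcal{F}$, together with Fatou applied to $\mathcal{G}$ (since $g \ge 0$), extract a limit minimizer $E_m \subset B_{R(m)}$. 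Local boundedness of $R$ in $m$ follows from the local boundedness in $m$ of the comparison energy.

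With both hypotheses verified, Theorem \ref{8p} applies and delivers the trichotomy (i)--(iii) verbatim, including the same modulus $w_m(\epsilon)$ from Proposition \ref{K} and the same constants $\epsilon_0, \gamma > 0$.

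The only step where the strengthened hypotheses (coercivity and local Lipschitz) are actually doing work, beyond $L^\infty_{loc}$, is the confinement argument; this is routine but is the one place one must be careful, since a quantitative use of coercivity (with the uniform energy bound) is required to rule out mass leakage to infinity. Every subsequent step is just an invocation of Theorem \ref{8p}.
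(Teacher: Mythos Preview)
Your proposal is correct and matches the paper's intended argument: the corollary is stated without proof immediately after Theorem~\ref{8p}, and the paper elsewhere remarks that coercivity of $g$ yields the existence assumption (ii) (minimizers $E_m \subset B_{R(m)}$ with $R \in L^\infty_{loc}(\mathbb{R}^+)$), while local Lipschitz continuity trivially gives $g \in L^\infty_{loc}$. Your direct-method sketch simply makes explicit what the paper leaves implicit.
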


\begin{thm} \label{8qp}
If $n=2$, the sub-level sets $\{g < t\}$ are convex, $g$ is locally Lipschitz, and $g$ admits minimizers $E_m \subset B_{R(m)}$ with $R \in L_{loc}^\infty(\mathbb{R}^+)$ which if convex are unique within the class of convex sets,  either:\\
(i) $E_m$ is convex for all $m \in (0,\infty)$;\\
(ii) there exists $\mathcal{M}>0$ such that for all $m \in (0, \mathcal{M})$, $E_m$ is convex and there exist $\epsilon_0, \gamma>0$ such that for all $\epsilon \le \epsilon_0$,
$$
\liminf_{m \rightarrow \mathcal{M}^-} \frac{\gamma(\mathcal{M}-m)}{w_m(\epsilon)} \ge 1,
$$
where $w_m(\epsilon)>0$ satisfies Proposition \ref{K}.
\end{thm}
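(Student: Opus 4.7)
The plan is to follow the template of Theorems \ref{85} and \ref{8p}. Set
$$
\mathcal{A} = \{m \in (0,\infty) : E_m \text{ is convex}\},
$$
which is unambiguous by the uniqueness-in-convex-class hypothesis, and let $\mathcal{M} = \inf\{m > 0 : m \notin \mathcal{A}\}$ (with $\mathcal{M} = \infty$ if no such $m$ exists). Figalli--Maggi's Theorem 1 forces $(0, m_a) \subset \mathcal{A}$ for some $m_a > 0$, so $\mathcal{M} > 0$. If $\mathcal{M} = \infty$ we are in alternative (i); otherwise $(0, \mathcal{M}) \subset \mathcal{A}$ by definition, and $E_\mathcal{M}$ is non-convex --- otherwise the openness given by Theorem \ref{q} would place a whole neighborhood of $\mathcal{M}$ in $\mathcal{A}$, contradicting $\mathcal{M} = \inf \mathcal{A}^c$. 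Note in passing that the ``two convex minimizers'' branch of the Theorem \ref{85} trichotomy is killed by the uniqueness-in-convex hypothesis, and Theorem \ref{q} kills the ``stagnant'' branch (iii); this explains why the present statement has only two alternatives.

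For any $m_k \uparrow \mathcal{M}$ in $(0, \mathcal{M})$, the convex minimizers $E_{m_k}$ sit in a fixed $B_R$ (thanks to $R \in L_{loc}^\infty$) with uniformly bounded anisotropic perimeter (by comparison with Wulff shapes of mass $m_k \le \mathcal{M}$), so subconverge in $L^1$ to a convex $T_\mathcal{M}$ with $|T_\mathcal{M}| = \mathcal{M}$. Lower semicontinuity of $\mathcal{F}$, continuity of $\mathcal{G}$ on the bounded domain (since $g \in L^\infty(B_R)$ by local Lipschitz regularity), and comparison of $\mathcal{E}(E_{m_k})$ against the scalings $\gamma_k E_\mathcal{M}$ with $\gamma_k^2 = m_k/\mathcal{M}$ show $T_\mathcal{M}$ is a minimizer of mass $\mathcal{M}$. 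Since $E_\mathcal{M}$ is non-convex and $T_\mathcal{M}$ is convex, they are not translates, so
$$
\epsilon_0 := \tfrac{1}{5} \inf_{x \in \mathbb{R}^2} \frac{|(E_\mathcal{M} + x) \Delta T_\mathcal{M}|}{|E_\mathcal{M}|} > 0.
$$

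For the quantitative bound, at each $m \in (0,\mathcal{M})$ the unique convex minimizer $E_m$ has an associated $L^1$-modulus $w_m$ via Proposition \ref{K}. A routine expansion using $|\gamma_k E_\mathcal{M} \Delta E_\mathcal{M}|, |\gamma_k^{-1} E_{m_k} \Delta E_{m_k}| \le a(1 - \gamma_k) \asymp (\mathcal{M} - m_k)$ (via \cite[Lemma 4]{MR2807136}) together with the $L^\infty$ bound on $g$ over $B_R$ yields
$$
|\mathcal{E}(\gamma_k E_\mathcal{M}) - \mathcal{E}(E_{m_k})| \le \gamma(\mathcal{M} - m_k)
$$
for a constant $\gamma = \gamma(\mathcal{M})$. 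If the asserted liminf were $< 1$ for some $\epsilon \le \epsilon_0$, one could extract a subsequence along which the right side lies below $w_{m_k}(\epsilon)$; Proposition \ref{K} would then produce translates $x_k$ with $|(\gamma_k E_\mathcal{M} + x_k) \Delta E_{m_k}|/|E_{m_k}| < \epsilon$. Passing to the limit ($\gamma_k \to 1$, $E_{m_k} \to T_\mathcal{M}$) gives $\inf_x |(E_\mathcal{M} + x) \Delta T_\mathcal{M}|/|E_\mathcal{M}| \le \epsilon \le \epsilon_0$, a contradiction.

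The main obstacle I anticipate is making the ``routine expansion'' actually routine: one must track the $n = 2$ exponent (so $1 - \gamma_k \asymp \mathcal{M} - m_k$ replaces the $\mathcal{M}^{(n-1)/n} - m^{(n-1)/n}$ appearing in higher dimension) and verify that the modulus $w_m$ produced by Proposition \ref{K} is genuinely available at every $m \in (0, \mathcal{M})$ --- including a dependence on $m$ compatible with the stated liminf. The former is bookkeeping; the latter reduces to the compactness-plus-uniqueness argument already used in the proofs of Theorems \ref{@'}, \ref{85}, and \ref{8p}.
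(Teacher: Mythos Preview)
Your proposal is correct and follows essentially the same route as the paper's own proof: define $\mathcal{A}$, show it contains an initial interval, use Theorem \ref{q} to force non-convexity at $\mathcal{M}$, extract a convex limiting minimizer $T_\mathcal{M}$, then replay the estimate from the proof of Theorem \ref{85} (with the $n=2$ exponent) against Proposition \ref{K}. Your choice $\mathcal{M}=\inf\{m:m\notin\mathcal{A}\}$ is a slight improvement over the paper's $\mathcal{M}=\sup\mathcal{A}$, since it gives $(0,\mathcal{M})\subset\mathcal{A}$ for free, and your explicit observation that the uniqueness-in-convex-class hypothesis combined with Theorem \ref{q} collapses the trichotomy of Theorem \ref{85} to a dichotomy is exactly the point of this statement; the paper leaves both of these implicit.
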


\begin{proof}
Define 
$$
\mathcal{A}=\{m: E_m \hskip .08in \text{is convex}\}
$$
$$
\mathcal{M}=\sup \mathcal{A}.
$$
Corollary \ref{m+e} implies $(0,m_a) \subset \mathcal{A}$, hence $\mathcal{M} >0$. Assume $E_{\mathcal{M}}$ is convex, then Theorem \ref{q} implies the existence of $\epsilon>0$ such that $E_m$ is convex for all $m \in (\mathcal{M}, \mathcal{M}+\epsilon)$ contradicting $\mathcal{M}=\sup \mathcal{A}$. Thus, for $m<\mathcal{M}$, $E_m$ is convex and there exists a non-convex minimizer $E_\mathcal{M}$. If $m_k<\mathcal{M}$, $m_k \rightarrow \mathcal{M}$, along a subsequence, $E_{m_k} \rightarrow T_\mathcal{M}$, with $|T_\mathcal{M}|=\mathcal{M}$, $T_\mathcal{M}$ a convex minimizer. Set 
$$
\epsilon=\frac{1}{5} \inf_{x} \frac{|(E_\mathcal{M}+x) \Delta T_\mathcal{M}|}{|E_\mathcal{M}|}>0
$$
and observe that the argument in Theorem \ref{85}'s proof yields (ii).
\end{proof}

\begin{rem}
Suppose $g$ is convex $\&$ coercive, then the assumptions in the theorem hold.
\end{rem}

\noindent {\bf Example:} \\
If $g=0$, $n=2$, $a(m,\epsilon)=w_m(\epsilon)=c\epsilon^2 m^{\frac{1}{2}}$ via Figalli-Maggi-Pratelli \cite{MR2672283}:  this yields for all $\mathcal{M}, \epsilon, \gamma>0$,
$$
\liminf_{m \rightarrow \mathcal{M}^-} \frac{\gamma(\mathcal{M}-m)}{w_m(\epsilon)} = \liminf_{m \rightarrow \mathcal{M}^-} \frac{\gamma(\mathcal{M}-m)}{c\epsilon^2 m^{\frac{1}{2}}}=0
$$
which precludes (ii).

\subsection{Proof of Theorem \ref{n=2}}

\begin{proof}
If $E_m=\cup_{k=1}^N A_k$, $\{A_k\}$ convex, disjoint, non-empty, $N>1$, there exists $\bar K$ such that
$$
|A_{\bar K} \cap \{g \neq 0\}|>0,
$$  
$0 \notin A_{\bar K}$: assume not, let $E=E_m \cup \text{conv}(A_j \cup A _l) \subset \{g=0\}$ via the convexity of  
$\{g =0\}$, see Figure 3; firstly observe that $|E|>m$, and since by a translation, the closure of $A_j \cup A _l$ is connected, 
$$
\mathcal{F}(\text{conv}(A_j \cup A _l)) \le \mathcal{F}(A_j \cup A _l)
$$
 (via e.g. Corollary 2.8 in \cite{MR1641031}); this implies
\begin{align*}
\mathcal{F}(E) &\le \mathcal{F}(E_m \setminus (A_j \cup A _l))+\mathcal{F}(\text{conv}(A_j \cup A _l))\\
& \le \mathcal{F}(E_m \setminus (A_j \cup A _l))+\mathcal{F}(A_j \cup A _l)= \mathcal{F}(E_m)
\end{align*}
which implies

\begin{equation} \label{E@}
\mathcal{F}(E) \le \mathcal{F}(E_m)
\end{equation}
therefore contracting with $a<1$, $|aE|=m$, $E\subset \{g =0\}$, hence 
$$\mathcal{E}(aE)=a\mathcal{E}(E)<\mathcal{E}(E) \le \mathcal{E}(E_m)$$ 
because \eqref{E@} is true, which contradicts that $E_m$ is a minimizer; 

hence there exists $\bar K$ such that
$$
|A_{\bar K} \cap \{g \neq 0\}|>0,
$$  
$0 \notin A_{\bar K}$; (iv) implies 
$$
\int_{A_{\bar K}} \nabla g(x) dx \neq 0.
$$
If $\nu \in \mathbb{S}^{1}$ and $t>0$ is small,

$$
\int_{A_{\bar K}+t\nu} g(x) dx \ge \int_{A_{\bar K}} g(x) dx
$$
since

$$
\int_{A_{\bar K}+t\nu} g(x) dx < \int_{A_{\bar K}} g(x) dx \Rightarrow \mathcal{E}(E_m) > \mathcal{E}((\cup_{k \neq \bar K} A_k) \cup  A_{\bar K+t\nu}),
$$
where for sufficiently small $t>0$, $|(\cup_{k \neq \bar K} A_k) \cup  A_{\bar K+t\nu}|=m$, and hence this contradicts $E_m$ having the smallest energy;
$$
\Rightarrow \int_{A_{\bar K}} g(x+t\nu)-g(x) dx \ge 0
$$
for $\nu \in \mathbb{S}^1$, $t>0$ small. Fix $\nu \in \mathbb{S}^1$, then for $t>0$ small
(i) yields
$$
\frac{|g(x+t\nu)-g(x)|}{t} \le M
$$
and dominated convergence implies

$$
\int_{A_{\bar K}} \nabla g(x) \cdot \nu dx \ge 0;
$$
$$
\Rightarrow \nu \cdot \int_{A_{\bar K}} \nabla g(x)  dx \ge 0
$$
for $\nu \in \mathbb{S}^1$; since 

$$
\int_{A_{\bar K}} \nabla g(x)  dx \neq 0
$$

$$
\Rightarrow \Big(\frac{-\int_{A_{\bar K}} \nabla g(x) }{|\int_{A_{\bar K}} \nabla g(x)|}\Big) \cdot \int_{A_{\bar K}} \nabla g(x)  dx \ge 0
$$

$$
\Rightarrow \int_{A_{\bar K}} \nabla g(x)  dx = 0,
$$
and this contradicts (iv).
\end{proof}

\begin{rem} 
The result is the first general convexity theorem for $m \in (0,|\{g<\infty\}|)$ with the convexity of sub-level sets $\{g<t\}$ (instead of a convexity assumption on $g$) \& includes non-convex functions. Moreover, the 
condition:\\
if $E\subset \{g<\infty\}$ is bounded convex, $0 \notin E$, $|E \cap \{g \neq 0\}|>0$, then 
$$
\int_E \nabla g(x) dx \neq 0,
$$
encodes the gravitational potential
\begin{equation*}  
g(x)=\begin{cases}
\alpha x_n & \text{if } x_n\ge 0\\
\infty  & \text{if }x_n<0
\end{cases}
\end{equation*}
because 
$$
 \int_E \nabla g(x) dx=(0,\alpha |E|)\neq 0,
$$
therefore this generates a unified theory.
\end{rem}

\begin{cor}
If $n=2$, $\phi(0)= 0$, $\phi'>0$, and\\ 
(i) 
\begin{equation*}  
g(x)=\begin{cases}
 \phi(x_n) & \text{if } x_n\ge 0\\
\infty  & \text{if }x_n<0
\end{cases}
\end{equation*}
(ii) $\mathcal{F}$ satisfies assumptions for the existence of minimizers $E_m \subset B_{R(m)}$ with $R \in L_{loc}^\infty(\mathbb{R}^+)$, \\

\noindent then $E_m$ is convex for all $m \in (0,\infty)$.
\end{cor}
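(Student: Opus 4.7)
The plan is to reduce the statement to a direct application of Theorem \ref{n=2} by verifying its four hypotheses in the current setting, where $\{g<\infty\}$ is the closed upper half-plane $\{x_2\ge 0\}$ and $g(x_1,x_2)=\phi(x_2)$ there. Condition (ii) of Theorem \ref{n=2} is assumed verbatim, and the content of the argument lies in checking (i), (iii), and especially the gradient non-vanishing condition (iv).

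First I would verify (i) and (iii). For (i), since $\phi'$ exists and is positive on $[0,\infty)$, $\phi$ is (at least locally) Lipschitz on $[0,\infty)$, and hence $g=\phi\circ\pi_2$ is locally Lipschitz on $\{g<\infty\}=\{x_2\ge 0\}$. For (iii), because $\phi(0)=0$ and $\phi'>0$ make $\phi$ strictly increasing on $[0,\infty)$, the sub-level set is either empty (if $t\le 0$) or the horizontal slab $\{(x_1,x_2): 0\le x_2<\phi^{-1}(t)\}$, which is convex.

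The main step — condition (iv) — is the one I would spend most care on, although even here the computation is short. I would take any bounded convex $E\subset\{x_2\ge 0\}$ with $0\notin E$ and $|E\cap\{g\neq 0\}|>0$. Since $\phi(0)=0$ and $\phi$ is strictly increasing, $\{g\neq 0\}\cap\{g<\infty\}=\{x_2>0\}$, so $|E\cap\{x_2>0\}|>0$. On $\{x_2\ge 0\}$ the gradient of $g$ in the sense needed (pointwise a.e., thanks to (i)) is $\nabla g(x)=(0,\phi'(x_2))$, and therefore
\[
\int_E \nabla g(x)\,dx=\Bigl(0,\int_E \phi'(x_2)\,dx\Bigr).
\]
Since $\phi'>0$ on $[0,\infty)$ and $E\cap\{x_2>0\}$ has positive measure, the second coordinate is strictly positive, and condition (iv) follows. (In the special case $\phi(t)=\alpha t$ this just recovers the identity $\int_E \nabla g\,dx=(0,\alpha|E|)\neq 0$ highlighted in the preceding remark.)

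With (i)--(iv) verified, Theorem \ref{n=2} applies and yields that $E_m$ is convex for every $m\in(0,|\{g<\infty\}|)=(0,\infty)$, which is the desired conclusion. The only mild technical issue I would be careful about is that $\phi$ is only assumed to satisfy $\phi(0)=0$ and $\phi'>0$, so I would interpret $\phi'$ in the weak/a.e.\ sense consistent with (i) when invoking the dominated convergence step of Theorem \ref{n=2}'s proof, but this is already built into that proof and requires no separate argument here.
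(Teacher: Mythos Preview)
Your proposal is correct and follows exactly the route the paper intends: the corollary is stated without proof and is meant to be read off from Theorem \ref{n=2} together with the preceding remark, which carries out precisely the computation $\int_E \nabla g = (0,\int_E \phi'(x_2)\,dx)\neq 0$ (specialized there to $\phi(t)=\alpha t$). Your verification of (i), (iii), and (iv) is what the paper leaves implicit, and your closing caveat about the regularity of $\phi$ is appropriate, since the paper tacitly assumes enough on $\phi'$ for the dominated-convergence step in Theorem \ref{n=2} to go through.
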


\begin{rem}
The simple case is when $\mathcal{F}=\mathcal{H}^1$: existence follows via Steiner symmetrization \cite{MR2178968, MR3055761} $\&$ compactness. Likewise, one may obtain existence for $\mathcal{F}$ which have symmetric tensions with respect to $\{x_1=0\}$, e.g. $f$ is admissible \cite{R}; see also \cite{MR2245755}. Classically, $\phi(x_n)=\alpha x_n$ generates the gravitational potential. The above generalization is new.
\end{rem}

\begin{cor}
If $n=2$ and\\ 
(i) g is locally Lipschitz in $\{g<\infty\}$\\
(ii) $g$ is coercive \\ 
(iii) the sub-level sets $\{g<t\}$ are convex\\
(iv) when $E \subset \{g<\infty\}$ is bounded convex, $0 \notin E$, $|E \cap \{g \neq 0\}|>0$, then 
$$
\int_E \nabla g(x) dx \neq 0,
$$
then $E_m$ is convex for all $m \in (0,|\{g<\infty\}|)$.
\end{cor}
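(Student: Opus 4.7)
The plan is to reduce directly to Theorem \ref{n=2}. The hypotheses (i), (iii), (iv) of this corollary are identical to those of Theorem \ref{n=2}, so the only missing piece is to verify hypothesis (ii) of that theorem: namely that $g$ admits minimizers $E_m \subset B_{R(m)}$ with $R \in L^\infty_{loc}(\mathbb{R}^+)$. In other words, the task is to upgrade ``$g$ is coercive'' to ``$g$ admits locally-uniformly-bounded minimizers,'' and then invoke Theorem \ref{n=2}.

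First I would establish existence for a fixed $m \in (0,|\{g<\infty\}|)$ by the direct method. Pick a minimizing sequence $\{E_k\}$ with $|E_k|=m$ and $\mathcal{E}(E_k)\to \inf\{\mathcal{E}(E):|E|=m\}$. Coercivity of $g$ prevents escape to infinity: if $|E_k\setminus B_R|\ge\delta>0$ along a subsequence for every $R$, then $\mathcal{G}(E_k)\ge \delta\cdot \inf_{|x|\ge R}g\to\infty$, contradicting the bound on $\mathcal{E}(E_k)$. Hence for some $R_0=R_0(m)$ one has $E_k\subset B_{R_0}$ (modulo relocating negligible mass), and compactness of sets of finite perimeter combined with lower semicontinuity of $\mathcal{F}$ and Fatou for $\mathcal{G}$ produces a minimizer $E_m\subset B_{R_0}$.

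Next I would show the minimizer itself is contained in a fixed ball whose radius depends only locally on $m$. The key comparison is this: let $K_0\subset \{g\le t_0\}$ be a fixed Wulff-type set of positive measure near the origin (which exists since $g(0)=0$ and $\{g<t\}$ is open convex). For $R$ large enough, $\inf_{|x|\ge R}g > \mathcal{E}(E_m)/m$; if $|E_m\setminus B_R|>0$, one can cut that portion and append an equal-measure set inside a dilate of $K_0$, strictly decreasing $\mathcal{G}$ with only a controlled increase of $\mathcal{F}$. Choosing $R$ sufficiently large, this modification strictly lowers $\mathcal{E}$, contradicting minimality; hence $E_m\subset B_{R(m)}$. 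For local boundedness in $m$, observe that on any compact sub-interval $[m_1,m_2]\subset (0,|\{g<\infty\}|)$ the energy $\mathcal{E}(E_m)$ is uniformly bounded (by a comparison with a one-parameter family of dilates of $K_0$), and the same coercivity estimate then yields a single $R$ valid for all such $m$. Thus $R\in L^\infty_{loc}(\mathbb{R}^+)$.

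With hypothesis (ii) of Theorem \ref{n=2} verified, the theorem applies and gives convexity of $E_m$ for every $m\in(0,|\{g<\infty\}|)$. The main obstacle is the minimizer-localization step: existence of minimizers is a textbook direct-method exercise, but the extraction of a fixed-ball containment for the minimizer itself (not merely for the minimizing sequence) requires the truncation/replacement comparison above. Everything else is essentially bookkeeping on top of Theorem \ref{n=2}.
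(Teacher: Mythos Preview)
Your proposal is correct and takes the same route the paper intends: the corollary is stated without proof, the only difference from Theorem~\ref{n=2} being that coercivity replaces hypothesis~(ii), and the paper remarks in the introduction that ``if $g$ is coercive, (ii) holds.'' Your direct-method argument and localization step spell out exactly this implication, after which Theorem~\ref{n=2} applies verbatim.
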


\begin{cor}
If $n=2$ and\\ 
(i) g is locally Lipschitz\\
(ii) g is coercive \\ 
(iii) the sub-level sets $\{g<t\}$ are convex\\
(iv) when $E$ is bounded convex, $0 \notin E$, $|E \cap \{g \neq 0\}|>0$, then 
$$
\int_E \nabla g(x) dx \neq 0,
$$
then $E_m$ is convex for all $m \in (0,\infty)$.
\end{cor}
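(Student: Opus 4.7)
The plan is to deduce this corollary as a direct specialization of the preceding corollary, whose conclusion is identical apart from the domain of regularity in hypothesis (i) and the qualifier ``$E \subset \{g<\infty\}$'' appearing in hypothesis (iv).

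The key reduction is the observation that under the present hypothesis (i), the function $g$ cannot attain the value $+\infty$: if $g$ is locally Lipschitz on $\mathbb{R}^2$, then $g$ is in particular finite and continuous everywhere, so $\{g<\infty\} = \mathbb{R}^2$. Combined with the coercivity in (ii), this exhibits $g$ as a finite-valued coercive locally Lipschitz function; coercivity then supplies minimizers $E_m \subset B_{R(m)}$ with $R \in L_{loc}^\infty(\mathbb{R}^+)$ through a standard direct-method/compactness argument (as noted in the discussion following the statement of Theorem \ref{n=2}), matching the existence requirement built into that theorem and into the preceding corollary.

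With this identification, the hypotheses of the preceding corollary transfer cleanly: (ii) and (iii) are unchanged; hypothesis (i) there (``locally Lipschitz in $\{g<\infty\}$'') coincides with (i) here because $\{g<\infty\} = \mathbb{R}^2$; and the qualifier ``$E \subset \{g<\infty\}$'' in (iv) becomes vacuous, so (iv) there reduces verbatim to (iv) here. Applying the preceding corollary yields convexity of $E_m$ for all
$$m \in (0,\,|\{g<\infty\}|) = (0,\infty),$$
which is the desired conclusion.

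There is no substantive obstacle; the statement is a repackaging of the preceding corollary under the slightly simpler global regularity hypothesis, removing the $\{g<\infty\}$ bookkeeping that was retained in the previous statement to accommodate potentials (like the gravitational one) which are permitted to be infinite on a set of positive measure. The only items requiring thought are the elementary observation that a locally Lipschitz function on $\mathbb{R}^2$ is finite-valued, and the invocation of coercivity to secure the existence and uniform boundedness of $E_m$ demanded by Theorem \ref{n=2}.
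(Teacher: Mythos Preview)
Your proposal is correct and matches the paper's intended (implicit) argument: the corollary is stated without proof precisely because it is the immediate specialization of the preceding corollary obtained by observing that a function locally Lipschitz on all of $\mathbb{R}^2$ satisfies $\{g<\infty\}=\mathbb{R}^2$, whence the constraint $E\subset\{g<\infty\}$ in (iv) is vacuous and the mass range $(0,|\{g<\infty\}|)$ becomes $(0,\infty)$. Your invocation of coercivity to secure the existence hypothesis of Theorem~\ref{n=2} is also the intended mechanism.
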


\begin{cor} \label{a_radi}
If $n=2$ and\\ 
(i) $g=g(|x|)$ is locally Lipschitz \\
(ii) g admits minimizers $E_m \subset B_{R(m)}$ with $R \in L_{loc}^\infty(\mathbb{R}^+)$ \\ 
(iii) when $g>0$, $g$ is (strictly) increasing \\
then $E_m$ is convex for all $m \in (0,\infty)$.
\end{cor}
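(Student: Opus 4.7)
The plan is to reduce to Theorem~\ref{n=2}: write $g(x) = h(|x|)$ and verify its four hypotheses. Hypothesis~(i) is immediate since $g$ is locally Lipschitz and $\{g<\infty\} = \mathbb{R}^2$; (ii) is assumed verbatim; (iii) holds because $h$ is non-decreasing, so every sub-level set $\{g < t\}$ is either empty, an open ball, or all of $\mathbb{R}^2$. The technical core is therefore (iv).

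For (iv), I fix $E$ bounded convex with $0 \notin E$ and $|E \cap \{g > 0\}| > 0$, and argue by contradiction, assuming $\int_E \nabla g\, dx = 0$. A separating hyperplane yields $v \in \mathbb{S}^1$ with $\langle x, v\rangle > 0$ for a.e.\ $x \in E$. Since $\nabla g(x) = h'(|x|)\, x/|x|$ a.e.\ by Rademacher and $h' \ge 0$,
$$
0 \;=\; \left\langle \int_E \nabla g(x)\, dx,\; v\right\rangle \;=\; \int_E h'(|x|)\,\frac{\langle x, v\rangle}{|x|}\, dx
$$
forces $h'(|x|) = 0$ for a.e.\ $x \in E$. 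Passing to polar coordinates and using that a bounded convex $E$ with $0 \notin E$ and $|E|>0$ has radial slices of positive $\mathcal{H}^1$-measure at every $r$ in the open range $(r_{\min}, r_{\max})$ (an elementary consequence of convexity of the interior of $E$), Fubini delivers $h'(r) = 0$ for a.e.\ $r \in [r_{\min}, r_{\max}]$. Absolute continuity of the Lipschitz function $h$ then forces $h \equiv c$ on this interval. If $c = 0$, then $g \equiv 0$ on $E$, contradicting $|E \cap \{g > 0\}| > 0$; if $c > 0$, then $[r_{\min}, r_{\max}] \subset \{h > 0\}$, on which $h$ must be strictly increasing by~(iii), forcing $r_{\min} = r_{\max}$ and hence $|E| = 0$, a contradiction.

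With (i)--(iv) in hand, Theorem~\ref{n=2} yields convexity of $E_m$ for all $m \in (0, |\{g < \infty\}|) = (0, \infty)$, which is the claim.

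The step I expect to require the most care is exactly the argument for (iv). The natural-looking claim that ``strictly increasing and Lipschitz'' forces $h' > 0$ a.e.\ on $\{h > 0\}$ is \emph{false} (fat-Cantor constructions provide strictly increasing Lipschitz $h$ with $h' = 0$ on a positive-measure set). What Fubini in polar coordinates does produce, however, is the stronger conclusion that $h'$ vanishes a.e.\ on a full \emph{interval} of radii, which by the fundamental theorem of calculus collapses to constancy on that interval, and only at this stage does the monotonicity hypothesis in (iii) close out the argument.
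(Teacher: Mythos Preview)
Your proof is correct and follows essentially the same approach as the paper: reduce to Theorem~\ref{n=2} by verifying its four hypotheses, with the substantive work in (iv), where a separating-hyperplane argument together with $h'\ge 0$ forces $h'(|x|)=0$ a.e.\ on $E$, hence $h$ constant on an interval of radii, contradicting strict monotonicity on $\{h>0\}$. Your passage from ``$h'(|x|)=0$ a.e.\ on $E$'' to ``$h$ constant on $[r_{\min},r_{\max}]$'' via Fubini in polar coordinates and absolute continuity is in fact more carefully argued than the paper's sketch of the same step, and your remark about fat-Cantor functions correctly identifies why one cannot shortcut through ``$h'>0$ a.e.''.
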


\begin{figure} 
{\centering\includegraphics[width=.6 \textwidth]{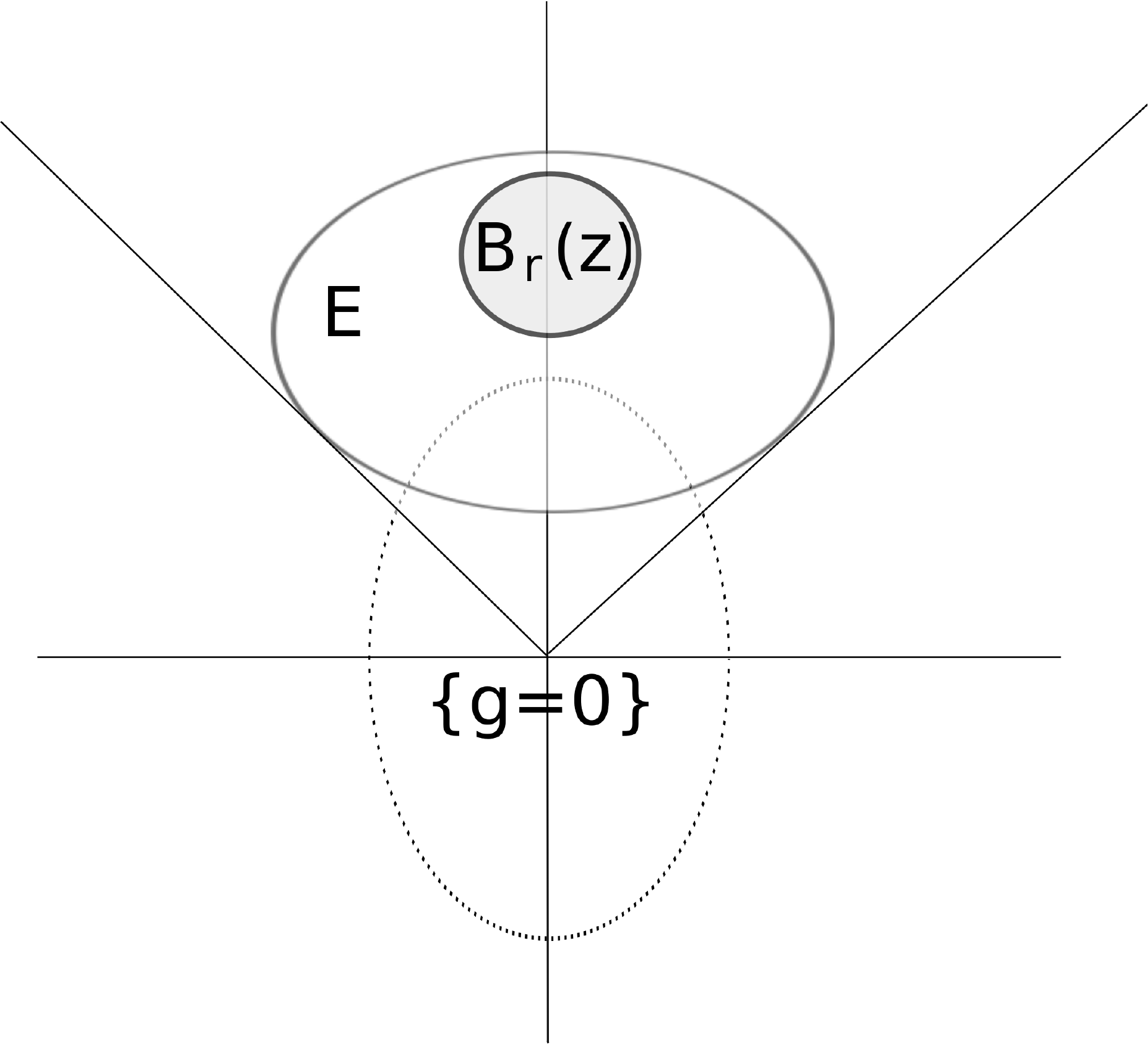}} \caption{$h'(|x|)=0$ on $B_r(z) \subset E \cap \{g \neq 0\}$, Corollary \ref{a_radi} \label{drawingq} }
\end{figure}

\begin{proof}
(i) yields $g(x)=h(|x|)$, $h:\mathbb{R}^+ \rightarrow \mathbb{R}^+$
$$
\Rightarrow \nabla g(x)=h'(|x|)\frac{x}{|x|},
$$
\& if $0 \notin E$ is convex such that $|E \cap \{ g \neq 0\}|>0$

$$
\Rightarrow \int_E \nabla g(x)dx = \int_E h'(|x|) \frac{x}{|x|} dx.
$$
Hence, up to a rotation,

$$
E \subset \text{a convex cone in $\{x_2>0\}$}
$$
thus
$$
\int_E \nabla g(x)dx=0 \Rightarrow \int_E h'(|x|) \frac{x_2}{|x|} dx=0
$$
and since $h' \ge 0$ a.e.
$\Rightarrow h'(|x|)=0$ for a.e. $x \in E$, see Figure \ref{drawingq}; therefore there exists $r>0$ such that  $h'(|x|)=0$ on $B_r(z) \subset E \cap \{g \neq 0\}$

$\Rightarrow g$ is constant on some interval in $\{g > 0\}$, a contradiction to (iii). This yields
$$
\int_E \nabla g(x)dx\neq 0. 
$$

\end{proof}

\begin{cor}
If $n=2$ and\\ 
(i) $g=g(|x|)$ is locally Lipschitz \\
(ii) $g$ is coercive \\ 
(iii) when $g>0$, $g$ is (strictly) increasing \\
then $E_m$ is convex for all $m \in (0,\infty)$.
\end{cor}

\begin{cor}[The radial convex potential] \label{ra}
If $n=2$ and $g=g(|x|)$ is coercive and convex, then $E_m$ is convex for all $m \in (0,\infty)$.
\end{cor}

\begin{rem}
This corollary is new! Assuming in addition $f(v)=f(-v)$, the convexity was obtained by McCann:  Corollary 1.4 in \cite{MR1641031}.
\end{rem}
\begin{rem}
 $g=g(|x|) \ge 0$ coercive and convex is equivalent to $g=g(|x|)\ge 0$ not identically zero and convex.
\end{rem}

\noindent {\bf Acknowledgement:} I want to thank several individuals for their comments on a preliminary version of the paper: Alessio Figalli, Robert McCann, Shibing Chen, Frank Morgan, Laszlo Lempert, and John Andersson. The interactions added quality. I want to especially thank Alessio, Robert, and Shibing for their energy and time investment.

\section{Appendix}

\subsection{Modulus of the free energy}
\begin{prop} \label{K}
If $m>0$, $g \in L_{loc}^\infty$, and up to translations and sets of measure zero, $g$ admits unique minimizers $E_m$, then for $\epsilon>0$ there exists $w_m(\epsilon)>0$ such that if 
$|E|=|E_m|$, $E \subset B_R$, and 
$$
\mathcal{A}(E,E_m):=|\mathcal{F}(E)-\mathcal{F}(E_m)|+|\int_E g(x)dx-\int_{E_m} g(x)dx|<w_m(\epsilon),
$$
then there exists $x \in \mathbb{R}^n$ such that 
$$
\frac{|(E_m+x) \Delta E|}{|E_m|}<\epsilon.
$$
\end{prop}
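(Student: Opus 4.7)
The plan is to argue by contradiction via a compactness/$L^1$-closure argument, a method entirely analogous to the one already used to conclude the proof of Theorem \ref{@'}. Suppose the statement fails. Then there is $\epsilon_0>0$ and a sequence of sets $E_k \subset B_R$ with $|E_k|=|E_m|$ and
$$
\mathcal{A}(E_k,E_m) \rightarrow 0, \qquad \inf_{x \in \mathbb{R}^n}\frac{|(E_m+x)\Delta E_k|}{|E_m|}\ge \epsilon_0>0.
$$
The goal is to extract an $L^1$ limit of $\{E_k\}$, show it must be a minimizer of mass $|E_m|$, and then invoke the uniqueness hypothesis to contradict the last displayed inequality.

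Since $\mathcal{F}(E_k)\rightarrow \mathcal{F}(E_m)$, in particular $\sup_k \mathcal{F}(E_k)<\infty$, and together with $E_k\subset B_R$ this gives a uniform perimeter bound (the anisotropic surface energy $\mathcal{F}$ is comparable to $\mathcal{H}^{n-1}\llcorner \partial^* E_k$ since $f$ is $1$-homogeneous and strictly positive on $\mathbb{S}^{n-1}$). Standard BV-compactness then yields a set $E_\infty \subset \bar B_R$ with $E_k \rightarrow E_\infty$ in $L^1(\mathbb{R}^n)$, $|E_\infty|=|E_m|$, and
$$
\mathcal{F}(E_\infty)\le \liminf_k \mathcal{F}(E_k)=\mathcal{F}(E_m),
$$
by lower semicontinuity of the anisotropic perimeter.

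For the potential part, since $g\in L^\infty_{loc}$ and $E_k \cup E_\infty \subset B_R$, the bound $|g\chi_{E_k}|\le (\sup_{B_R\cap \{g<\infty\}} g)\chi_{B_R}$ is dominating, so the dominated convergence theorem gives $\int_{E_k} g\, dx \rightarrow \int_{E_\infty} g\, dx$. Combining with $|\int_{E_k} g - \int_{E_m} g| \rightarrow 0$, we obtain $\int_{E_\infty} g = \int_{E_m} g$. Therefore
$$
\mathcal{E}(E_\infty)=\mathcal{F}(E_\infty)+\int_{E_\infty} g \,dx \le \mathcal{F}(E_m) + \int_{E_m} g\, dx = \mathcal{E}(E_m),
$$
so $E_\infty$ is a minimizer of mass $|E_m|$. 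By the uniqueness hypothesis there exists $x_0\in \mathbb{R}^n$ with $|(E_m+x_0)\Delta E_\infty|=0$, hence
$$
\frac{|(E_m+x_0)\Delta E_k|}{|E_m|} \le \frac{|(E_m+x_0)\Delta E_\infty|}{|E_m|} + \frac{|E_\infty \Delta E_k|}{|E_m|} \rightarrow 0,
$$
contradicting the assumed lower bound $\epsilon_0$. One then defines $w_m(\epsilon)$ as the supremum of all $\delta>0$ for which the implication $\mathcal{A}(E,E_m)<\delta \Rightarrow \inf_x|(E_m+x)\Delta E|/|E_m|<\epsilon$ holds; the contradiction just obtained shows $w_m(\epsilon)>0$.

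The only real technical point, and where one must be slightly careful, is passing to the limit in the potential term: since $g$ is only assumed locally bounded (not continuous or globally bounded), one needs the a priori confinement $E_k\subset B_R$ to produce an $L^\infty$ dominating function. Everything else is a routine application of BV-compactness, lower semicontinuity of $\mathcal{F}$, and the triangle inequality in $L^1$ applied to characteristic functions.
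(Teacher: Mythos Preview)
Your argument is correct and follows essentially the same route as the paper's proof: contradiction, uniform perimeter bound from $\mathcal{A}(E_k,E_m)\to 0$, BV-compactness in $B_R$, lower semicontinuity to show the limit is a minimizer, and then uniqueness to contradict the assumed $\epsilon_0$ gap. The only cosmetic difference is that the paper bounds $\mathcal{F}(E_k)$ via $\mathcal{F}\le\mathcal{E}$ (using $g\ge 0$) and invokes lower semicontinuity of $\mathcal{E}$ directly, whereas you split the energy and handle $\mathcal{G}$ explicitly by dominated convergence; both are equivalent here.
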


\begin{proof}
Assume not, then there exists $\epsilon>0$ and for $w>0$, there exist $E_w'$ and $E_m$, $|E_w'|=|E_m|=m$, 

$$
\mathcal{A}(E_w',E_m)<w
$$

$$
\inf_{x} \frac{|(E_m+x)\Delta E_w'|}{|E_m|} \ge \epsilon;
$$

in particular, set $w=\frac{1}{k}$, $k \in \mathbb{N}$; observe that there exist $E_{\frac{1}{k}}'$, $|E_{\frac{1}{k}}'|=m$,

 $$
|\mathcal{E}(E_m)-\mathcal{E}(E_{\frac{1}{k}}')|\le \mathcal{A}(E_{\frac{1}{k}}',E_m)<\frac{1}{k},
$$

$$
\inf_{x} \frac{|(E_m+x)\Delta E_{\frac{1}{k}}'|}{|E_m|} \ge \epsilon;
$$

thus

$$
\mathcal{F}(E_{\frac{1}{k}}') \le \mathcal{E}(E_{\frac{1}{k}}') < \frac{1}{k}+\mathcal{E}(E_m), 
$$

$E_{\frac{1}{k}}' \subset B_R$,

and the compactness for sets of finite perimeter implies --up to a subsequence--
$$
E_{\frac{1}{k}}' \rightarrow E' \hskip .3in in \hskip .08in L^1(B_R), 
$$
and therefore $|E'|=m$,

$$
\mathcal{E}(E') \le \liminf_k \mathcal{E}(E_{\frac{1}{k}}') =\mathcal{E}(E_m),
$$
which implies
$E'$ is a minimizer contradicting
$$
\inf_{x} \frac{|(E_m+x)\Delta E'|}{|E_m|} \ge \epsilon
$$
via the uniqueness of minimizers.
\end{proof}

\begin{cor}  \label{K*}
If $m>0$, $g \in L_{loc}^\infty$, and up to translations and sets of measure zero, $g$ admits unique minimizers $E_m$,
for $\epsilon>0$ there exists $w_m(\epsilon)>0$ such that if 
$|E|=|E_m|$, $E \subset B_R$, and 
$$
|\mathcal{E}(E)-\mathcal{E}(E_m)|<w_m(\epsilon),
$$
then there exists $x \in \mathbb{R}^n$ such that 
$$
\frac{|(E_m+x) \Delta E|}{|E_m|}<\epsilon.
$$
\end{cor}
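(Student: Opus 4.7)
The plan is to mimic the argument for Proposition \ref{K} nearly verbatim, observing that the hypothesis of Corollary \ref{K*} is in fact sufficient to run the same contradiction. Note that since $g \ge 0$, one has $\mathcal{F}(E) \le \mathcal{E}(E)$, so a smallness hypothesis on $|\mathcal{E}(E)-\mathcal{E}(E_m)|$ already controls $\mathcal{F}$ uniformly; this is the only place in the proof of Proposition \ref{K} where the stronger quantity $\mathcal{A}(E,E_m)$ was invoked, and it can be replaced with $|\mathcal{E}(E)-\mathcal{E}(E_m)|$ once the potential part is handled by a separate compactness argument.

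First I would argue by contradiction: assume the statement fails, so there is $\epsilon > 0$ and a sequence $E_k' \subset B_R$ with $|E_k'| = |E_m| = m$,
$$
|\mathcal{E}(E_k')-\mathcal{E}(E_m)| < \tfrac{1}{k},
\qquad
\inf_{x \in \mathbb{R}^n}\frac{|(E_m+x)\Delta E_k'|}{|E_m|} \ge \epsilon.
$$
Since $g \ge 0$, $\mathcal{F}(E_k') \le \mathcal{E}(E_k') < \tfrac{1}{k} + \mathcal{E}(E_m)$, so $\sup_k \mathcal{F}(E_k') < \infty$. Combined with $E_k' \subset B_R$, the compactness theorem for sets of finite perimeter yields a subsequence $E_k' \to E'$ in $L^1(B_R)$ with $|E'| = m$.

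Next I would pass to the limit in both pieces of the energy: lower semicontinuity of $\mathcal{F}$ gives
$$
\mathcal{F}(E') \le \liminf_{k \to \infty} \mathcal{F}(E_k'),
$$
and since $g \in L_{loc}^\infty$ is bounded on $B_R$ while $\chi_{E_k'} \to \chi_{E'}$ in $L^1(B_R)$, dominated convergence gives
$$
\int_{E_k'} g(x)\,dx \;\longrightarrow\; \int_{E'} g(x)\,dx.
$$
Adding these, $\mathcal{E}(E') \le \liminf_k \mathcal{E}(E_k') = \mathcal{E}(E_m)$, so $E'$ is a minimizer of mass $m$. By the uniqueness hypothesis (mod translations and null sets), there exists $x_0 \in \mathbb{R}^n$ with $|(E_m + x_0) \Delta E'| = 0$. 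Then $L^1$ convergence of $E_k'$ to $E'$ forces $|(E_m + x_0) \Delta E_k'|/|E_m| < \epsilon$ for $k$ large, contradicting the standing lower bound.

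The only mildly delicate step is the convergence of the potential part, which hinges on $g \in L_{loc}^\infty$ together with the uniform containment $E_k' \subset B_R$; without either of these, dominated convergence would not apply. Everything else is a routine transcription of the proof of Proposition \ref{K}, and no new estimates are needed.
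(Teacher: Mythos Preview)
Your argument is correct and is essentially what the paper does, only spelled out in full. The paper's entire proof is the one-line observation
\[
|\mathcal{E}(E)-\mathcal{E}(E_m)| \le \mathcal{A}(E,E_m),
\]
the point being that in the proof of Proposition~\ref{K} the hypothesis $\mathcal{A}(E_{1/k}',E_m)<1/k$ is used \emph{only} to deduce $|\mathcal{E}(E_{1/k}')-\mathcal{E}(E_m)|<1/k$ (and hence the perimeter bound $\mathcal{F}(E_{1/k}')\le \mathcal{E}(E_{1/k}')<1/k+\mathcal{E}(E_m)$); so the same proof runs verbatim starting from the weaker assumption. Your write-up makes this explicit and also fills in the dominated-convergence step for the potential term, which the paper leaves implicit in its appeal to lower semicontinuity of $\mathcal{E}$.
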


\begin{proof}
$$
|\mathcal{E}(E)-\mathcal{E}(E_m)| \le \mathcal{A}(E,E_m).
$$
\end{proof}

\subsection{Convexity in higher dimension}
One feature of Theorem \ref{@'} is the lower bound on the modulus. The result also addresses a conjecture stated in Figalli and Maggi \cite{MR2807136}.\\

{\bf Conjecture: convexity of minimizers in the small mass regime} \\

In the small mass regime, minimizers are connected and uniformly close to a (properly
rescaled and translated) Wulff shape, in terms of the smallness of the mass. The convexity
of these minimizers remains conjectural, with the exception of the planar case $n=2$ and of
the $\lambda$-elliptic case\\
(Figalli and Maggi \cite{MR2807136}, p. 147).\\

Assuming $g \in C^1$, $f \in C^{2, \alpha}(\mathbb{R}^n\setminus\{0\})$, $\alpha \in (0,1)$ is $\lambda-$elliptic, Figalli and Maggi \cite{MR2807136} proved the existence of $m_0=m_0(n,g,f)>0$ such that if $m \le m_0$, $E_m$ is convex [Theorem 2, \cite{MR2807136}]. 

Assume $g \in L_{loc}^\infty$, $m_0(n,g,f)$ is called {\bf stable} if there exist $g_a \rightarrow g$, $f_a \rightarrow f$ pointwise, with $g_a \in C^1$,  $f_a \in C^{2, \alpha}(\mathbb{R}^n\setminus\{0\})$  $\lambda-$elliptic, $\alpha \in (0,1)$ such that: $$\liminf_a m_0(n,g_a,f_a) >0.$$

Note that if $g \in C^1$,  $f \in C^{2, \alpha}(\mathbb{R}^n\setminus\{0\})$  is $\lambda-$elliptic, $\alpha \in (0,1)$, then $m_0(n,g,f)$ is stable because there exist $f_a=f \rightarrow f$, $g_a=g \rightarrow g$ $\&$
$$\liminf_a m_0(n,g_a,f_a)=m_0(n,g,f) >0.$$

\begin{cor}[Corollary of Theorem \ref{@'}] \label{m_s}
If $g \in L_{loc}^\infty$ and $m_0(n,g,f)$ is stable, then the conjecture is true.
\end{cor}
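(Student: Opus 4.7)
The plan is to exploit the stability hypothesis to approximate the rough data $(g,f)$ by smooth data $(g_a, f_a)$ for which Figalli and Maggi's small-mass convexity theorem (Theorem 2 in \cite{MR2807136}) applies directly, and then to transfer convexity back to the original problem using Theorem \ref{@'} as a uniqueness input.

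First I would fix an approximating sequence $(g_a, f_a)$ realizing the stability definition and set
$\bar m_0 := \min\{m_0, \liminf_a m_0(n, g_a, f_a)\}$,
where $m_0 > 0$ is the uniqueness threshold coming from Theorem \ref{@'} applied to $g$. Both quantities are positive by hypothesis, so $\bar m_0 > 0$. For every $m \in (0, \bar m_0)$ and every sufficiently large $a$, the bound $m \le m_0(n, g_a, f_a)$ holds, and Figalli-Maggi Theorem 2 yields that any minimizer $E_m^a$ of the smoothed energy $\mathcal{E}_a$ (built from $f_a$ and $g_a$) with $|E_m^a| = m$ is convex.

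Next I would pass to a limit. Comparison with a Wulff-shape competitor of mass $m$ gives a uniform perimeter bound on the family $\{E_m^a\}$; translating each set so that its barycenter sits in a fixed ball and invoking standard $BV$ compactness extracts a subsequence converging in $L^1_{loc}$ to some $E^*$ with $|E^*|=m$. Convexity is preserved under $L^1_{loc}$-convergence (pointwise-a.e.\ limits of indicators of convex sets are indicators of convex sets, modulo null sets), so $E^*$ is convex.

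The hard step, and the main obstacle, is to verify that $E^*$ is itself a minimizer of the original energy $\mathcal{E}$. This requires lower semicontinuity of the anisotropic surface energy under the simultaneous convergences $E_m^a \to E^*$ in $L^1_{loc}$ and $f_a \to f$ pointwise on $\mathbb{S}^{n-1}$ (a Reshetnyak-type statement), together with $\int g_a\, \chi_{E_m^a}\, dx \to \int g\, \chi_{E^*}\, dx$. The latter is a dominated-convergence argument on a fixed ball containing the sets, provided the approximants $g_a$ can be arranged uniformly bounded on that ball --- a property compatible with, and essentially built into, the stability definition since $g \in L^\infty_{loc}$. Once $E^*$ is identified as a minimizer of $\mathcal{E}$ at mass $m < m_0$, Theorem \ref{@'} forces the true minimizer $E_m$ to coincide with $E^*$ modulo translations and sets of measure zero, so $E_m$ is convex. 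Since $m \in (0, \bar m_0)$ was arbitrary, this establishes small-mass convexity and therefore the Figalli-Maggi conjecture in the stable regime.
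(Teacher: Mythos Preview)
Your proposal is correct and follows essentially the same route as the paper: use the stability hypothesis to produce convex minimizers $E_{m,a}$ for the approximated energies via Figalli--Maggi's Theorem 2, pass to a limit by compactness to obtain a convex minimizer $E^*$ of $\mathcal{E}$, and then invoke Theorem \ref{@'} to conclude that \emph{every} minimizer at that mass coincides with $E^*$ up to translation. The paper's own proof is terser---it asserts the convergence $E_{m,a_k}\to E$ and the minimality of the limit without further comment---whereas you correctly flag the Reshetnyak-type lower semicontinuity under the joint convergence $f_a\to f$, $E_m^a\to E^*$ and the dominated-convergence step for the potential term as the places where real work (or additional hypotheses on the approximants) is needed.
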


\begin{proof}
Theorem 2 in Figalli and Maggi \cite{MR2807136} implies that for smooth $g_a$, and elliptic $f_a$, if $m$ is sufficiently small, there exists a convex set $E_{m,a}$ which is a minimizer of the free energy with respect to $f_a$, $g_a$ with mass 
$m$. Assume $f$ is not elliptic but $1$-homogeneous and convex. Then there exists $f_a$ elliptic such that $f_a(x) \rightarrow f(x)$ for all $x$; moreover, there exists $\{g_a\}$ smooth with $g_a \rightarrow g$. Thus supposing $m_0(n,g,f)$ is stable, along a subsequence, 
$$
E_{m,a_k} \rightarrow E,
$$    
where $E$ is a convex minimizer of the free energy with respect to $f$, $g$ and $|E|=m < \liminf_a m_0(n,g_a,f_a)$. The above theorem implies that for $m$ sufficiently small, $E$ is the unique minimizer (mod translations and sets of measure zero).  
\end{proof} 

\begin{rem}
Assuming $m_0$ is stable, the theorem in Figalli and Maggi implies the existence of convex minimizers for $m \le m_0$, nevertheless it does not preclude the existence of other minimizers. Theorem \ref{@'} yields all minimizers are convex when $m$ is small. 
\end{rem}

\begin{rem}
If $f$ is crystalline, Figalli and Zhang recently proved the conjecture: for sufficiently small mass minimizers are polyhedra \cite{pFZ}.
\end{rem}

\newpage
\bibliographystyle{amsalpha}
\bibliography{References}

\end{document}